\documentclass[11pt,epsfig]{article}

\usepackage{fullpage,amsmath,amsfonts,amsthm,graphicx,amssymb,textcomp,enumerate,multicol, thmtools, thm-restate}
\graphicspath{{C:/Users/Gabe/Desktop/Math Stuff/Research/Comparing (g,k)-Trisections/Figures}}

\newcommand{\Lim}[1]{\raisebox{0.5ex}{\scalebox{0.9}{$\displaystyle \lim_{#1}\;$}}} 

\newtheorem{theorem}{Theorem}[section]
\newtheorem{corollary}[theorem]{Corollary}
\newtheorem{lemma}[theorem]{Lemma}
\newtheorem{proposition}[theorem]{Proposition}

\theoremstyle{definition}
\newtheorem{definition}[theorem]{Definition}
\newtheorem{question}[theorem]{Question}

\theoremstyle{remark}
\newtheorem{remark}[theorem]{Remark}

\title{Comparing 4-Manifolds in the Pants Complex via Trisections}
\author{Gabriel Islambouli}

\begin{document}

\maketitle

\begin{abstract}
Given two smooth, oriented, closed 4-manifolds $M_1$ and $M_2$, we construct two invariants, $D^P(M_1, M_2)$ and $D(M_1, M_2)$, coming from distances in the pants complex and the dual curve complex respectively. To do this, we adapt work of Johnson on Heegaard splittings of 3-manifolds to the trisections of 4-manifolds introduced by Gay and Kirby. Our main results are that the invariants are independent of the choices made throughout the process, as well as interpretations of ``nearby" manifolds. This naturally leads to various graphs of 4-manifolds coming from unbalanced trisections, and we briefly explore their properties. 
\end{abstract}

\section{Introduction}
	
In \cite{JJ}, Johnson uses two closely related simplicial complexes associated to surfaces in order to define invariants of 3-manifolds. In particular, using Heegaard splittings, \cite{JJ} defines distances between two 3-manifolds in the pants complex and the dual curve complex, which are independent of the particular Heegaard splittings chosen. An interesting interpretation of the distance between $M_1$ and $M_2$ in the dual curve complex is that it is equal to the minimum number of components of a link $L \subset M_1$ so that Dehn surgery along $L$ produces $M_2$. 

	Through the recent work of Gay and Kirby in \cite{GK}, trisections of 4-manifolds have arisen as an analogue to Heegaard splittings. A $(g,k)$-trisection of a 4-manifold $X$ is a decomposition into 3 pieces, $X= X_1 \cup X_2 \cup X_3$, where each $X_i$ is diffeomorphic to $ \natural ^k S^1 \times D^3$ and is equipped with a genus $g$ splitting of $\# ^kS^1 \times S^2$ on its boundary. The intersection of the three pieces, $X_1 \cap X_2 \cap X_3$, is a surface of genus g called the \textbf{trisection surface}. One of the nice features of a trisection is that it encodes all of the data of a smooth 4-manifold as curves on the trisection surface. We are therefore able to access the numerous complexes associated to surfaces to address questions about 4-manifolds.
	
	We seek to adapt the work of \cite{JJ} to 4-manifolds through trisections.  One of the key observations which allows us to do this is that if we have two 4-manifolds equipped with a $(g,k)$-trisection for the same $g$ and $k$, we may cut out a chosen $X_i$ from each of them, and glue them together in a way which respects the Heegaard splitting on the boundary of $X_i$. This gives a way to view all relevant curves on a single surface, and hence compare them in the chosen complex. This allows us to define two non-trivial distances between trisections: $D(T_1,T_2)$ and $D^P(T_1,T_2)$. 
	
If $T$ is a genus $h$ trisection, and $g=h+3n$ for $n \in \mathbb{N}$, there is a natural way to construct a genus $g$ trisection of the same 4-manifold, which we call a stabilization of $T$ and denote $T^g$. The main theorem of the paper is the following:

\begin{restatable*}{theorem}{welldefined}
\label{thm:wellDefined}
Let $M_1$ and $M_2$ have trisections $T_1$ and $T_2$ respectively. Then the limit $\Lim{g \to \infty}D(T_1^g, T_2^g)$ is well defined and depends only on the underlying manifolds, $M_1$ and $M_2$.

\end{restatable*}

We also show the analogous result in the pants complex, and this allows us to define two natural number valued invariants of two 4-manifolds, $D(M_1,M_2)$ and $D^P(M_1,M_2)$. Sections 3 and 4 are dedicated to exploring properties of these invariants. In Section 3, we find various upper and lower bounds. For example, if $\sigma(M)$ denotes the signature of M, we obtain the following inequality: 

\begin{restatable*}{proposition}{bound}
\label{prop:bound}
$D(M_1,M_2) \geq \frac{1}{2}|\sigma(M_1) - \sigma(M_2)|$.

\end{restatable*}

Section 4 consists of interpretations of nearby manifolds in terms of Kirby calculus. We first show that  manifolds which are close in the pants complex have very similar Kirby diagrams. More precisely, we show the following:

\begin{restatable*}{theorem}{pantsDistanceOne}
\label{thm:PantsDistanceOne}
If $D^P(M_1, M_2)=1$, then $M_1$ and $M_2$ have Kirby diagrams which are identical, except for the framing on some 2-handle.
\end{restatable*}

We also show that manifolds with similar Kirby diagrams are close in the pants complex, which is encompassed in the following theorem:

\begin{restatable*}{theorem}{framingPants}
\label{thm:FramingPants}
Let $M_1$ and $M_2$ be non-diffeomorphic 4-manifolds with the same Euler characteristic which have Kirby diagrams $K_1$ and $K_2$ respectively. If $K_1$ and $K_2$ only differ in the framing of some 2-handle, where the framing differs by 1, then $D^P(M_1,M_2) = 1$.
\end{restatable*}

Our line of inquiry in constructing these invariants leads naturally to the construction of various graphs of 4-manifolds coming from subgraphs of the pants complex and the dual curve complex. Section 5 is dedicated to carefully defining these graphs and obtaining some connectivity results.

\begin{center} \section*{Acknowledgements} \end{center}

The author would like to thank David Gay and Alex Zupan for the helpful conversation which made the author aware of \cite{JJ}. The author would particularly like to thank his advisor, Slava Krushkal, for his many helpful comments and patience throughout the preparation of this paper. This research was supported in part by NSF grant DMS-1612159.
\newline

\subsection{Simplicial Complexes Associated to Surfaces}

The most commonly used complex associated to a surface is the curve complex. It has proven to be a useful tool in investigating the structure of the mapping class group of an orientable surface. We recall the definition here:

\begin{definition}
Given a closed, orientable surface of genus $g \geq 2$, $\Sigma$, the \textbf{curve complex} of $\Sigma$, denoted $C(\Sigma)$, is a simplicial complex built out of simple closed curves on $\Sigma$. Each isotopy class of essential simple closed curves corresponds to a vertex. A collection of n vertices spans an (n-1)-simplex if the corresponding curves can be isotoped to be pairwise disjoint.
\end{definition}

In his seminal work in \cite{JH}, Hempel used the curve complex to give an invariant of Heegaard splittings generalizing the notions of reducibility, weak reducibility, and the disjoint curve property. While Hempel's distance is an indispensable tool for investigating the structure of Heegaard splittings of a 3-manifold, it is unlikely to be useful for constructing invariants of manifolds. This is due to the fact that the invariant completely collapses when a Heegaard splitting is stabilized. Our set up for trisections will have similar problems, so we consider the dual of the curve complex. 

\begin{definition}
Given a closed, orientable surface of genus $g \geq 2$, the \textbf{dual curve complex} of $\Sigma$, denoted $C^*(\Sigma)$, is the simplicial complex whose vertices correspond to maximal dimensional simplices of $C(\Sigma)$. Two vertices in $C^*(\Sigma)$ have an edge between them if the corresponding maximal dimensional simplices of $C(\Sigma)$ share a codimension 1 face.
\end{definition}

For a closed, orientable surface of genus $g \geq 2$, maximal dimensional simplices in $C(\Sigma)$ are of dimension $3g-4$ and correspond to a set $3g-3$ simple closed curves whose union separates the surface into pairs of pants. An edge in the dual curve complex therefore corresponds to starting with one pants decomposition of a surface and replacing one curve in order to obtain another pants decomposition of the surface. If instead of allowing arbitrary curve replacements, we insist that curves are replaced in the simplest way possible, we obtain the pants complex.

\begin{definition}
Given a surface $\Sigma$, the \textbf{pants complex} of $\Sigma$, denoted $P(\Sigma)$, is the simplicial complex whose vertices correspond to isotopy classes of pants decompositions of $\Sigma$. Two vertices $v$ and $v'$ in $P(\Sigma)$ are connected by an edge if the corresponding pants decompositions only differ in one curve, and the two different curves intersect minimally. That is, if $l \in v$ and $l'\in v'$ with $l \neq l'$, then either $l$ and $l'$ lie on a punctured torus with $|l \cap l'| = 1$ or $l$ and $l'$ lie on a four punctured sphere with $|l \cap l'| = 2$. In the case that  $l$ and $l'$ lie on a punctured torus, we say that $v$ and $v'$ are related by an \textbf{S-move}. If $l$ and $l'$ lie on a four punctured sphere we say that $v$ and $v'$ are related by an \textbf{A-move}. See figure \ref{fig:PantsComplexMoves} for an illustration of these moves.
\end{definition}

\begin{figure}
\centering
\includegraphics[scale=.3]{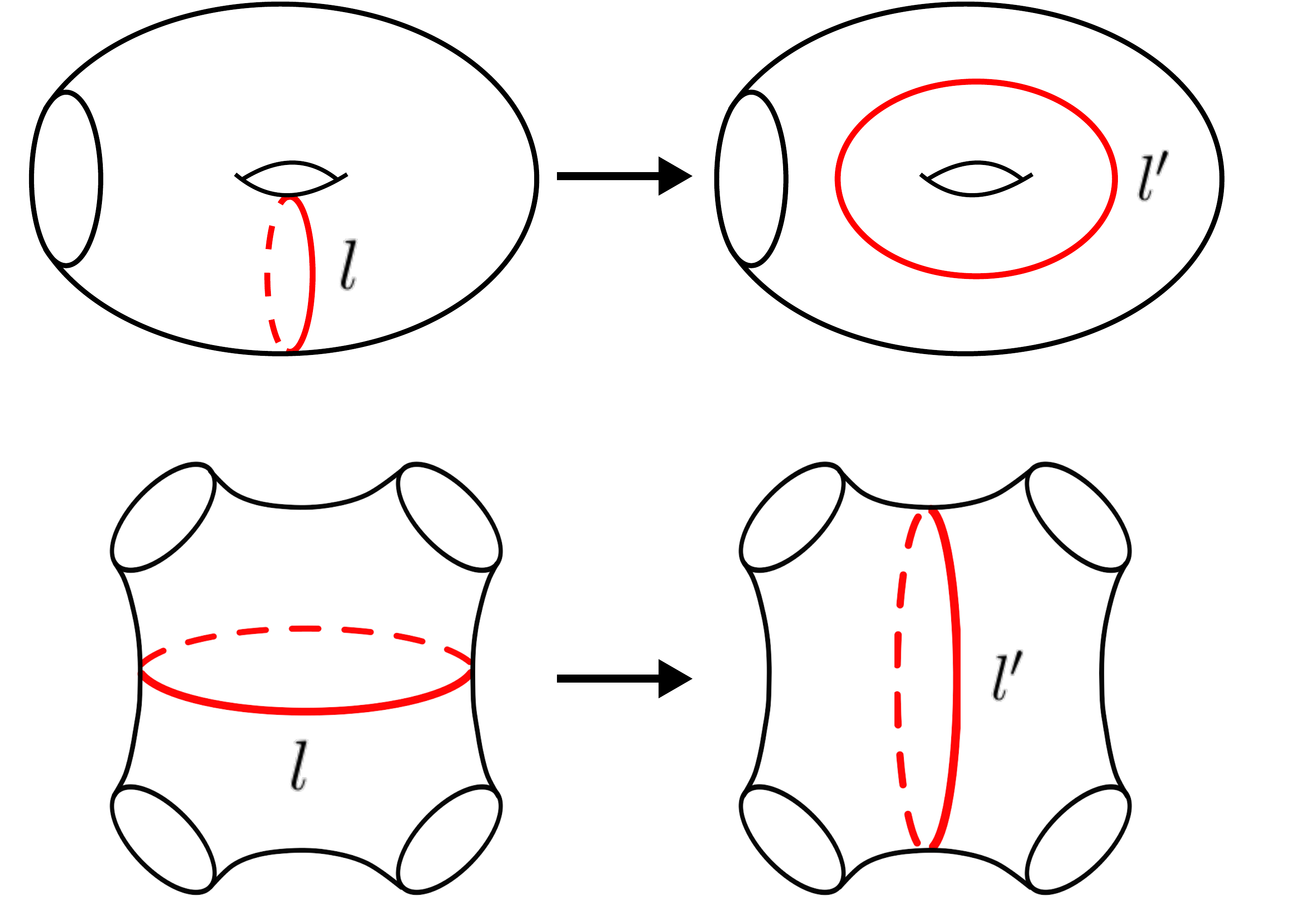}
\caption{Above: An S-move in the pants complex. Below: An A-move in the pants complex.}
\label{fig:PantsComplexMoves}
\end{figure}

There is a natural map from the pants complex into the dual curve complex which is bijective on vertices and injective on edges. In \cite{HT}, Hatcher and Thurston prove that the pants complex is connected and so the aforementioned map shows that the dual curve complex is connected. We therefore get naturally defined metrics on the 1-skeleton of these complexes.

\begin{definition}
Let $v_1$ and $v_2$ be two vertices in $C^*(\Sigma)$. The \textbf{dual distance}, $D(v_1,v_2)$ is the length of the minimal path between $v_1$ and $v_2$ in the dual curve complex. Similarly if $v_1$ and $v_2$ are two vertices in $P(\Sigma)$, the \textbf{pants distance} $D^P(v_1,v_2)$ is the length of the minimal path between $v_1$ and $v_2$ in the pants complex.
\end{definition}

Since the pants complex appears as a subcomplex of the dual curve complex, we get the inequality $D^P(v_1,v_2) \geq D(v_1,v_2)$. This inequality should be kept in mind when bounds are discussed later in the paper. 

\subsection{Trisections}

To fix notation, we briefly summarize the relevant notions in the theory of trisections of 4-manifolds. For a more detailed account, the reader is referred to \cite{GK}.

\begin{definition}
A $\mathbf{(g,k})$\textbf{-Trisection} of a smooth, closed 4-manifold $M$ is a decomposition $M = X_1 \cup X_2 \cup X_3$ such that: 
\begin{itemize}
\item $X_i \cong \natural ^kS^1 \times D^3$
\item $X_i \cap X_j = H_{ij}$ is a genus g handlebody.
\item $\partial X_i = H_{ij}\cup H_{ik}$ is a genus g Heegaard splitting for  $\partial X_i \cong \# ^kS^1 \times S^2$
\item $X_1 \cap X_2 \cap X_3$ is a closed, orientable, genus g surface.
\end{itemize}
\end{definition}
\vspace{.05in}

In \cite{GK}, Gay and Kirby show that every smooth, closed, 4-manifold admits a trisection. At times, it will be useful to relax the condition that all of the $X_i$ are diffeomorphic to the same 4-dimensional handlebody. In particular, we allow $X_i \cong \natural ^{k_i} S^1 \times D^3$ where for $i \neq j$ it is possible that $k_i \neq k_j$. In this case, we insist that  $\partial X_i = H_{ij}\cup H_{ik}$ is a genus g Heegaard splitting for  $\partial X_i \cong \# ^{k_i} S^1 \times S^2$. We will call this more general setup an \textbf{unbalanced} $\mathbf{(g;k_1,k_2,k_3)-trisection}$. Note that $S^4$ has unbalanced trisections with parameters $(1;1,0,0)$, $(1;0,1,0)$, and $(1;0,0,1)$. These can be used to balance trisections by taking connected sums (see definition 3.8 of \cite{MSZ} for more details on this construction). Unless otherwise noted, all trisections will be assumed to be balanced.

The union $H_{12} \cup H_{23} \cup H_{31}$ is called the \textbf{spine} of the trisection. Note that if we thicken the spine of the trisection by taking the product of the surface with $D^2$ and the handlebodies with $D^1$, then we are left with a 4-manifold with 3 boundary components each diffeomorphic to $\#^{k}S^1 \times S^2$. Recovering the original 4-manifold amounts to gluing back in 3 copies of $\natural^k S^1 \times D^3$. By \cite{LP} this can only be done in one way. In other words, the spine uniquely determines the trisection. The spine, in turn, is uniquely determined by 3 cut systems for the handlebodies which pairwise form Heegaard diagrams for  $\#^{k}S^1 \times S^2$. Thus, a trisected 4-manifold is completely determined by these cut systems drawn on the trisection surface $\Sigma$. We refer to the trisection surface, together with the 3 cut systems as a \textbf{trisection diagram}.

If $T_1$ is a $(g_1,k_1)$-trisection, and $T_2$ is a $(g_2,k_2)$-trisection, we may form their connected sum $T_1 \# T_2$, which inherits the structure of a $(g_1+g_2,k_1+k_2)$-trisection. On the level of diagrams, this amounts to taking the connected sum of trisection diagrams for $T_1$ and $T_2$. $S^4$ has a $(3,1)$ trisection so that if $T$ is a $(g,k)$-trisection for $M^4$, we may form a $(g+3,k+1)$-trisection for $M$ by taking a connected sum with the aforementioned trisection for $S^4$. The resulting $(g+3,k+1)$-trisection is called a \textbf{stabilization} of T. Let T be a genus h trisection and let $g=h+3n$ for some $n \in \mathbb{N}$. We denote by $\mathbf{T^g}$ the $(h+3n,k+n)$-trisection obtained by stabilizing $T$ $n$ times to a genus $g$ trisection. If $T$ has spine $H_{12} \cup H_{23} \cup H_{31}$ and trisection surface $\Sigma$, we will denote the spine and trisection surface of $T^g$ by $H_{12}^g \cup H_{23}^g \cup H_{31}^g$ and $\Sigma^g$. The following theorem will be essential for extending invariants of trisections to invariants for 4-manifolds. It can be seen as the analogue to the Reidemeister-Singer theorem for trisections.

\begin{theorem} (Theorem 11 of \cite{GK}) 
\label{thm:TrisectionsStabilize}
If $T_1$ and $T_2$ are trisections of the same manifold, then there exists a natural number $n$ so that $T_1^n$ and $T_2^n$ are isotopic as trisections. That is, if $T_1^n = X_1 \cup X_2 \cup X_3$ and $T_2^n= Y_1 \cup Y_2 \cup Y_3$ then there exists a map $f$ isotopic to the identity so that $f(X_i)=Y_i$.
\end{theorem}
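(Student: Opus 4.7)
The natural framework is Cerf theory for smooth maps to surfaces. The plan is to translate the trisection data into smooth maps $M \to \mathbb{R}^2$, connect two such maps through a generic homotopy, and show that every event encountered in the homotopy can be realized by trisection stabilization after possibly stabilizing $T_1$ and $T_2$ in advance.

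First I would set up the dictionary between $(g,k)$-trisections and a special class of stable smooth maps $f \colon M \to D^2$. I view the target as split by three rays from the origin into three sectors, so that $f^{-1}$ of each sector is a $4$-dimensional handlebody, $f^{-1}$ of each ray is a $3$-dimensional handlebody, and $f^{-1}$ of the origin is the trisection surface $\Sigma$. For $f$ to come from a trisection, its fold locus must be concentric arcs: definite folds on the outer circle and indefinite folds arranged so that the three sectoral fibrations assemble correctly at the center. I would then check that the combinatorial stabilization $T \mapsto T^{g+3}$ (connected sum with the standard $(3,1)$-trisection of $S^4$) corresponds to a standard local modification of $f$ that introduces a pair of indefinite folds together with the cusps that trisected form requires.

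Next I would connect the maps $f_1$ and $f_2$ obtained from $T_1$ and $T_2$ by a generic $1$-parameter family $f_t$ of smooth maps $M \to \mathbb{R}^2$. By the Cerf-theoretic classification of singularities of generic homotopies, the singular behavior of $f_t$ changes by finitely many local moves on the fold/cusp locus: swallowtails, lip births and deaths, beak-to-beak transitions, and codimension-one fold tangencies. The core of the proof is to show that each such move can be absorbed into a stabilization. For each move one writes a local model and checks that performing a stabilization in the appropriate region of $T_1$ supplies exactly the extra fold and cusp structure needed to bridge across the event while keeping the map trisected. After all moves have been handled, one obtains a smooth family of trisected maps from $T_1^n$ to $T_2^n$, and uniqueness of the filling-in of the spine by $4$-dimensional $1$-handlebodies (cited in Section 2 via \cite{LP}) yields the desired isotopy $f$ with $f(X_i) = Y_i$.

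The hard part will be global: a generic move can push folds across one of the dividing rays and destroy the condition that each sector pulls back to $\natural^k S^1 \times D^3$. Handling this requires a sliding argument that post-composes $f_t$ with a time-dependent self-diffeomorphism of $\mathbb{R}^2$, together with finitely many additional auxiliary stabilizations, so that the fold pattern remains in trisected configuration throughout. Showing that finitely many stabilizations suffice and that the whole process terminates in a single $n$ is the key technical point, and it is the direct $4$-dimensional analogue of the way the Reidemeister-Singer theorem resolves Cerf moves on Morse functions via Heegaard stabilization.
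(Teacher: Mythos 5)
This statement is not proved in the paper at all: it is quoted verbatim as Theorem 11 of \cite{GK} and used as a black box (the analogue of Reidemeister--Singer). So the relevant comparison is with the Gay--Kirby proof, and your outline is essentially their strategy: encode a trisection as a trisected Morse $2$-function $M \to D^2$ with radial/concentric fold arcs, realize stabilization as a local modification of the map, join two such maps by a generic homotopy, and absorb the Cerf-type events (swallowtails, lips, beak-to-beak, fold crossings) into stabilizations. The approach is the right one and would work.

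That said, as written the proposal defers exactly the parts that constitute the actual proof, and one of them deserves to be named explicitly because it is a major external input rather than a routine check. Before any of the event-by-event analysis can begin, one needs to know that the generic homotopy $f_t$ can be arranged to stay \emph{indefinite} and \emph{fiber-connected} for all $t$; this is the main theorem of Gay and Kirby's earlier work on indefinite Morse $2$-functions, and without it the ``sliding argument'' in your last paragraph has nothing to act on, since a definite fold or a disconnected fiber appearing mid-homotopy cannot be repaired by post-composing with a diffeomorphism of $\mathbb{R}^2$. The second substantive omission is the step where, within each sector, the indefinite fold arcs must be made radially monotone (``ordered'') so that the preimage of the sector is actually $\natural^k S^1\times D^3$ rather than an arbitrary cobordism; this is a one-dimension-lower Cerf argument about reordering critical values, and it is where most of the auxiliary stabilizations are generated. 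Finally, the count of how many stabilizations each event costs, and hence that a single finite $n$ suffices, depends on that bookkeeping. None of this makes the plan wrong, but a referee would say the proposal restates the theorem's difficulty rather than resolving it; for the purposes of the present paper the correct move is simply to cite \cite{GK}, as the author does.
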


\section{Distances of Trisections}

Let $(T_1, \Sigma_1)$ and $(T_2, \Sigma_2)$ be two $(g,k)-$trisections with corresponding spines $H_{\alpha1} \cup H_{\beta1} \cup H_{\gamma1}$ and $H_{\alpha2} \cup H_{\beta2} \cup H_{\gamma2}$. Both  $H_{\alpha1} \cup H_{\beta1}$ and $H_{\alpha2} \cup H_{\beta2}$ are genus g Heegaard splittings for $\#^k S^1 \times S^2$. Waldhausen's theorem \cite{FW} therefore asserts that both of these are in fact the unique genus g splitting of $\#^k S^1 \times S^2$. Therefore, there exists a map $\phi: H_{\alpha1} \cup H_{\beta1}  \to  H_{\alpha2} \cup H_{\beta2}$ so that $\phi(H_{\alpha1}) = H_{\alpha2}$ and $\phi(H_{\beta1}) = H_{\beta2}$. Such a map induces isometries on the various complexes associated to $\Sigma_1$ and $\Sigma_2$, and we will denote the induced isometry by $\hat{\phi}$.

If we fix an identification of both $H_{\alpha1} \cup H_{\beta1}$ and $H_{\alpha2} \cup H_{\beta2}$ with $\#^k S^1 \times S^2$, all such maps (up to isotopy with $\phi_t(\Sigma_1) = \Sigma_2$) can be identified with the mapping class group of the Heegaard splitting which we denote $\mathbf{Mod(\#^k S^1 \times S^2, \Sigma_g)}$. The mapping class groups of Heegaard splittings have been studied extensively and can be quite complicated. For example, when $g>k$, the group $Mod(\#^k S^1 \times S^2, \Sigma_g)$ will always have pseudo-Anosov elements \cite{JR}.

%From the point of view of the curve complex, if $g \geq 3$, then it is known that for any $M$, $Mod(M, \Sigma_g)$ is isomorphic to the group of isometries of $C(\Sigma_g)$ taking the handlebody sets to themselves.

We say a vertex $v \in C^*(\Sigma)$ (or $P(\Sigma)$) \textbf{defines a handlebody}, H, if all of the curves in the pants decomposition corresponding to v bound disks in H. Equivalently, attaching 3 dimensional 2-handles to $\Sigma$ along the curves of v and filling in the resulting 2 sphere boundary components with 3 balls produces H. We are now ready to define the main objects of study.

\begin{definition}
Let $M_1$ and $M_2$ be two 4-manifolds equipped with $(g,k)-$trisections $T_1$ and $T_2$. The \textbf{dual distance} between $T_1$ and $T_2$, $D(T_1, T_2)$, is \newline
 $min \{D(\hat{\phi}(v_1),v_2)|$ $v_1$ defines  $H_{\gamma1}$, $v_2$ defines $H_{\gamma2}\}$. Similarly, the \textbf{pants distance}, $D^P(T_1, T_2)$, is \newline
 $min \{D^P(\hat{\phi}(v_1),v_2)|$ $v_1$ defines  $H_{\gamma1}$, $v_2$ defines $H_{\gamma2}\}$.
\end{definition}

\begin{figure}
\centering
\includegraphics[scale=.44]{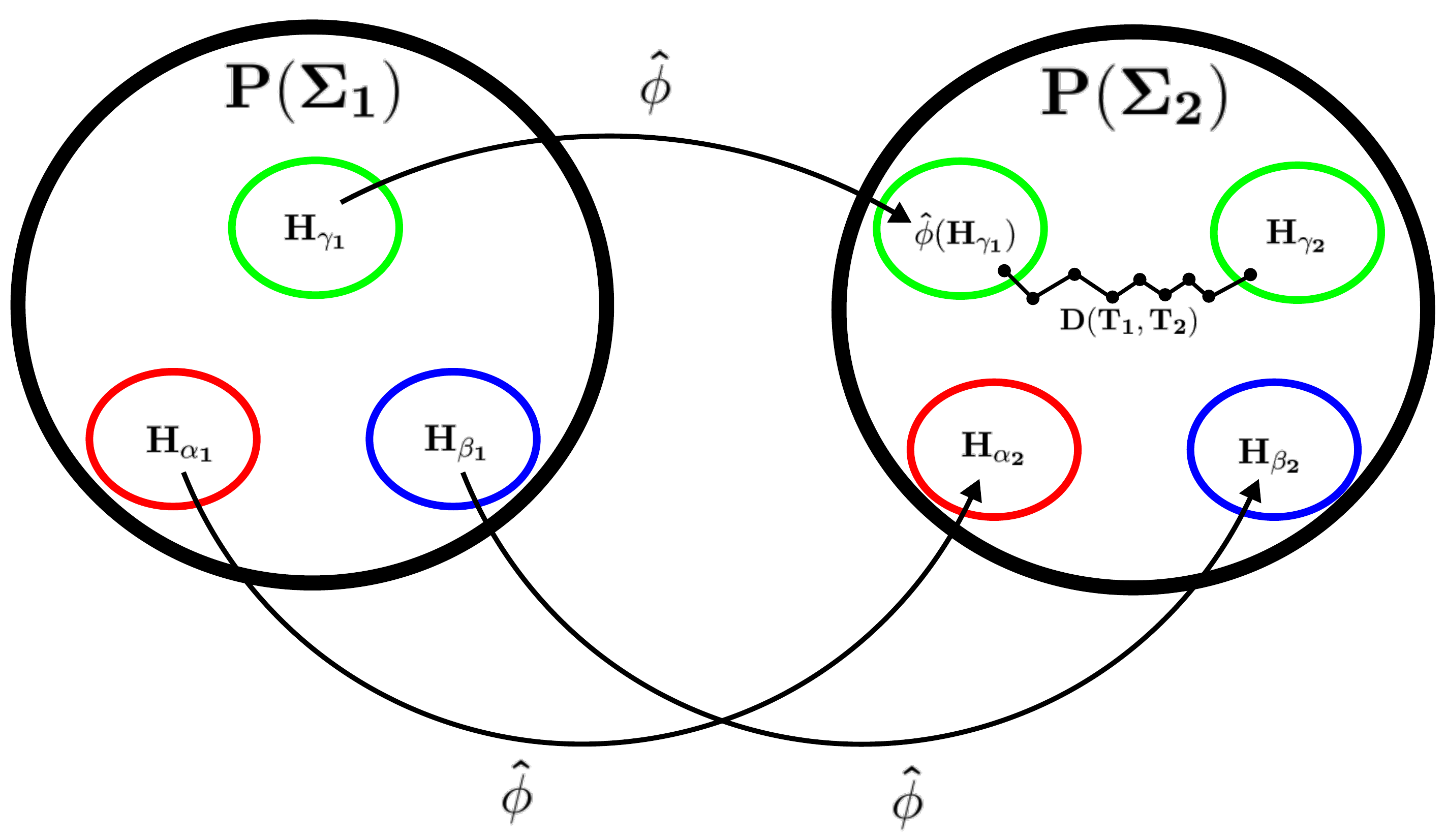}
\caption{The distance between two trisections is the minimum distance between the sets $\hat{\phi}(H_{\gamma1})$ and $H_{\gamma2}$}
\label{fig:DistancePicture}
\end{figure}

Here, the minimum is taken over all orientation preserving maps $\phi: H_{\alpha1} \cup H_{\beta1}  \to  H_{\alpha2} \cup H_{\beta2}$ so that $\phi(H_{\alpha1}) = H_{\alpha2}$ and $\phi(H_{\beta1}) = H_{\beta2}$ as well as all vertices defining the handlebodies in the respective complexes. See Figure \ref{fig:DistancePicture} for an illustration of the definition. Since these distances are natural number valued, they give well defined invariants of two $(g,k)$-trisections. Furthermore, if either distance is $0$, then the distance minimizing map extends to a homeomorphism of spines, which means that $T_1$ and $T_2$ are in fact diffeomorphic trisections. Since there are many manifolds admitting a $(g,k)$-trisection for a given g and k with $g \neq k$, this distance is nontrivial.

We now seek to extend these distances of particular trisections to well defined distances of 4-manifolds. To do this, we need to understand how the distance behaves under stabilization. If $(T, \Sigma)$ is a trisection, we may stabilize $T$ by puncturing $\Sigma$ in a disk and gluing on the stabilizing surface shown in Figure \ref{fig:PuncturedStabilizingSurface}. From this point of view, it is clear that we should begin by understanding paths in the complexes associated to $\Sigma \backslash D^2$. The following key lemma treating this case is contained in Lemma 15 of \cite{JJ}.

\begin{lemma} 
\label{lem:PathInPuncturedSurface}
Let $v_1,v_2,....v_n$ be a minimal path in $C^*(\Sigma)$ (or $P(\Sigma)$) between two handlebodies $H_1$ and $H_2$. Then there exists a disk $D \subset \Sigma$ and a path $v_1', v_2',... v_m'$ on $C^*(\Sigma  \backslash D)$ (respectively $P(\Sigma  \backslash D))$ with $m \leq 2n$ so that after capping off $\Sigma \backslash D$ with a disk, every loop in  $v_1'$ bounds a disk in $H_1$, and every loop in $v_m'$ bounds a disk in $H_2$. Moreover, if there is some loop which is never moved in the path from $v_1$ to $v_n$, then there exists a disk $D$ and a path $v_1', v_2',... v_m'$ satisfying the previous conclusions with $m=n$.
\end{lemma}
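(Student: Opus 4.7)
The plan is to proceed by induction on $n$, with each inductive step lifting the final move of the path to at most two moves in the punctured surface complex. The key structural observation is that an S-move or A-move is supported on a subsurface $R \subset \Sigma$, either a punctured torus or a 4-punctured sphere, whose boundary consists of curves shared by $v_{n-1}$ and $v_n$. If the puncturing disk $D$ sits in a pair of pants disjoint from $R$, the elementary move between $v_{n-1}$ and $v_n$ reproduces verbatim as an elementary move in $\Sigma \setminus D$ by keeping the augmenting curve fixed.

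For the inductive step, I would apply the hypothesis to the truncated path $v_1, \ldots, v_{n-1}$, obtaining a disk $D$ and a lifted path of length at most $2(n-1)$ whose final vertex projects to a pants decomposition defining the handlebody of $v_{n-1}$. Two cases arise when extending through the final move $v_{n-1} \to v_n$. If $D$ already lies in a pants disjoint from the support $R$, a single additional vertex suffices. Otherwise, I would first insert a preliminary vertex in $\Sigma \setminus D$ whose pants decomposition differs from the previous only in the augmenting curve, relocating $D$ into a pants disjoint from $R$ while preserving the projection to the handlebody of $v_{n-1}$; the original move is then lifted as in the first case. This contributes at most two extra vertices, giving $m \leq 2n$.

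For the moreover statement, suppose some curve $c$ belongs to every $v_i$. I would place $D$ in a collar neighborhood of $c$ on a fixed side, and choose the augmenting curve $\gamma$ to be a parallel copy of $c$ in $\Sigma \setminus D$ so that $c$, $\gamma$, and $\partial D$ cobound a pair of pants. Because $c \in v_i$ for all $i$, the curve $\gamma$ augments every $v_i$ to a pants decomposition of $\Sigma \setminus D$, and each move in the original path reproduces as a single move on the remaining curves, yielding a lifted path of length exactly $n$.

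The main obstacle is the relocation step in the general case: when $D$ is inside the support $R$ of a move, one must exhibit a single elementary move in $\Sigma \setminus D$ that swaps only the augmenting curve, after which $D$ sits in a pants disjoint from $R$ and the projection still defines the same handlebody. Verifying this requires a careful analysis of the possible positions of $D$ within a punctured torus or a 4-punctured sphere and the valid augmenting curves in each configuration. A secondary subtlety is choosing the side of $c$ in the moreover case so that the collar containing $D$ and $\gamma$ avoids the support of every move in the path, which may require using the minimality of the path together with the constraint that $c$ is never swapped.
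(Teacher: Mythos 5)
The paper does not actually prove this lemma --- it is quoted verbatim as Lemma 15 of \cite{JJ} (Johnson), so there is no internal argument to compare against. Your outline is, in substance, Johnson's argument: fix $D$ inside a pair of pants of the decomposition, augment each $v_i$ by one extra curve (a push-off of some curve $c$ of $v_i$ cobounding a pair of pants with $c$ and $\partial D$), lift each elementary move directly when its support misses the pair of pants containing $\partial D$, spend one extra move to re-position the augmenting curve otherwise, and in the presence of a never-moved curve park the augmenting curve next to it once and for all to get $m=n$. So the approach is the right one.

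The one place where your write-up falls short of a proof is exactly the place you flag: you assert, but do not verify, that the ``relocation'' costs a single elementary move and lands the pair of pants containing $\partial D$ away from the support $R$. This is the crux (it is where the factor of $2$ comes from), and it does go through, but it needs the following check. If the augmenting curve $\gamma$ currently shadows $c$ and the next move swaps $c$, let $P$ be the pair of pants of $v_{n-1}$ that contains $D$, with boundary curves $c,x,y$. At least one of $x,y$ is a curve of $v_{n-1}$ distinct from $c$ (three boundary circles of $P$ cannot all be glued to $c$), and a single A-move in the four-holed sphere bounded by $c,\partial D,x,y$ replaces $\gamma$ by a push-off of that curve; the projection to $\Sigma$ is unchanged as a pants decomposition, so the intermediate vertex still defines the handlebody of $v_{n-1}$, and the original swap of $c$ now lifts verbatim. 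Two smaller points: it is the augmenting curve, not $D$ itself, that moves (your wording suggests otherwise in places); and in the ``moreover'' case you do not actually need the collar of $c$ to avoid the supports $R_i$ --- since every new curve is disjoint from $c$, it can be isotoped off the collar, so each move lifts even when $P\subset R_i$. With those verifications supplied, your argument is complete and matches the cited one.
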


In what follows, we will adapt the work of \cite{JJ} to prove that the invariants of trisections behave well under stabilization. In order to aid exposition, we will only explicitly treat the case of the distance in the dual curve complex. It should be clear after the fact that by using the part of Lemma \ref{lem:PathInPuncturedSurface} pertaining to the pants complex, all of the arguments go through virtually unchanged.

\begin{figure}
\centering
\includegraphics[scale=.4]{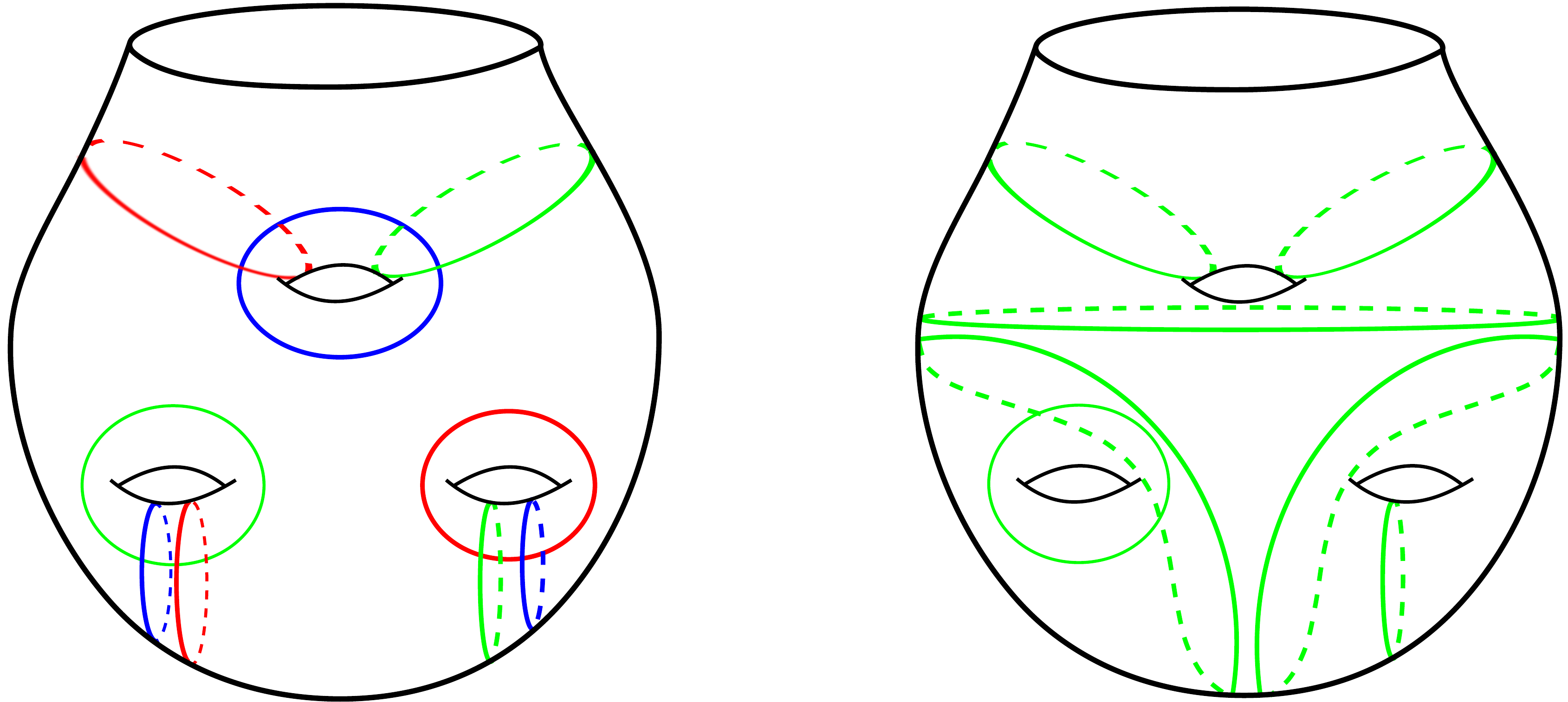}
\caption{Left: Stabilizing a trisection amounts to gluing this diagram onto a punctured trisection diagram. Right: A pants decomposition for one handlebody of the stabilizing surface}
\label{fig:PuncturedStabilizingSurface}
\end{figure}

\begin{lemma}
Let $(T_1,\Sigma_1)$ and $(T_2,\Sigma_2)$ be $(g,k)$-trisections, and let $T_1^h$ and $T_2^h$ be their genus $h$ stabilizations. Then  $D(T_1^h,T_2^h) \leq 2D(T_1,T_2)$.
\end{lemma}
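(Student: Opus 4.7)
The plan is to lift a minimizing path for $D(T_1,T_2)$ to a path in the complex of the once-punctured trisection surface via Lemma \ref{lem:PathInPuncturedSurface}, and then extend this path across the stabilizing surface by adjoining a fixed pants decomposition of its $\gamma$-handlebody.

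Set $n := D(T_1,T_2)$, and choose a map $\phi$ together with vertices $v_1,v_2$ defining $H_{\gamma 1}$ and $H_{\gamma 2}$ respectively, realizing this distance via a minimal path $\hat\phi(v_1)=u_0,u_1,\ldots,u_n=v_2$ in $C^*(\Sigma_2)$. The endpoint $u_n=v_2$ defines $H_{\gamma 2}$, while $u_0=\hat\phi(v_1)$ defines the handlebody bounded by $\Sigma_2$ obtained by extending $\phi|_{\Sigma_1}$ across the $\gamma$-side of $\Sigma_1$. Since both endpoints of this path define handlebodies, Lemma \ref{lem:PathInPuncturedSurface} supplies a disk $D\subset\Sigma_2$ and a path $v_0',v_1',\ldots,v_m'$ in $C^*(\Sigma_2\setminus D)$ with $m\leq 2n$, such that after capping by $D$, $v_0'$ defines the same handlebody as $u_0$ and $v_m'$ defines $H_{\gamma 2}$.

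Now perform the stabilization at $D$ on the $T_2$-side and at $\hat\phi^{-1}(D)$ on the $T_1$-side by gluing on the once-punctured, genus-$(h-g)$ standard stabilizing surface $S$ equipped with its canonical cut systems. Extending $\phi$ by the identity on $S$ (after the natural identification of the two copies) produces a map $\phi^h:H_{\alpha 1}^h\cup H_{\beta 1}^h\to H_{\alpha 2}^h\cup H_{\beta 2}^h$ respecting the $\alpha$- and $\beta$-handlebodies. Let $P_\gamma\subset S$ be a pants decomposition (as in the right of Figure \ref{fig:PuncturedStabilizingSurface} in the genus-$3$ case) whose curves, together with $\partial D=\partial S$, bound disks in the stabilizing portion of the $\gamma$-handlebody. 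Then each $w_i:=v_i'\cup\{\partial D\}\cup P_\gamma$ is a pants decomposition of $\Sigma_2^h$, and since consecutive $v_i',v_{i+1}'$ differ by a single curve replacement confined to $\Sigma_2\setminus D$, the same holds for $w_i,w_{i+1}$. This yields a path $w_0,w_1,\ldots,w_m$ in $C^*(\Sigma_2^h)$ of length $m\leq 2n$.

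To conclude, set $w_2^h:=w_m$ and $w_1^h:=(\hat\phi^h)^{-1}(w_0)$. Because $v_m'$ defines $H_{\gamma 2}$ after capping and $\{\partial D\}\cup P_\gamma$ defines the stabilizing piece of $H_{\gamma 2}^h$, the union $w_2^h$ defines $H_{\gamma 2}^h$; symmetrically, $w_1^h$ defines $H_{\gamma 1}^h$. Hence $D(T_1^h,T_2^h)\leq D(\hat\phi^h(w_1^h),w_2^h)\leq m\leq 2n=2D(T_1,T_2)$. The substantive input here is Lemma \ref{lem:PathInPuncturedSurface}; the remaining work is bookkeeping, and the main point to verify carefully is that attaching the fixed $P_\gamma$ across the stabilization neither destroys the one-curve-replacement adjacency along the path nor interferes with the endpoint vertices defining the correct stabilized $\gamma$-handlebodies.
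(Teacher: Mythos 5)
Your proposal is correct and follows essentially the same route as the paper: lift the minimizing path to the punctured surface via Lemma \ref{lem:PathInPuncturedSurface}, glue on the stabilizing surface, and extend each vertex by $\partial D$ together with a fixed pants decomposition of the stabilizing piece. The only cosmetic difference is that you perform the genus-$(h-g)$ stabilization in one step, whereas the paper treats a single stabilization first and then connect-sums several stabilizing surfaces; these are the same argument.
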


\begin{proof}
 If $D(T_1,T_2) = n$ then there is a map $\phi: H_{\alpha1} \cup H_{\beta1}  \to  H_{\alpha2} \cup H_{\beta2}$ and a path $v_1,v_2,....v_n$ in $C^*(\Sigma_2)$ so that $\hat{\phi}(v_1)$ defines $\phi(H_{\gamma_1})$ and $v_n$ defines $H_{\gamma_2}$. Let $l_i^1...l_i^m$ be the loops corresponding to the pants decomposition given by $v_i$. Let $D$ be a disk in the annulus formed by two parallel copies of $l_1^1$. Consider the pants decomposition for $\Sigma_2 \backslash D$ consisting of $l_1^1,...l_1^m,l_1^{m+1}$ where $l_1^{m+1}$ is the parallel copy of $l_1^1$ on $\Sigma_2$ lying on the other side of $D$ on $\Sigma_2 \backslash D.$ Let $v_1'$ be the corresponding vertex of $C^*(\Sigma_2 \backslash D)$
 
By Lemma \ref{lem:PathInPuncturedSurface}, there is a path from $v_1'$ to a vertex $w$ such that if $l \in w$, then after capping off $\Sigma_2 \backslash D$ with a disk, $l$ is isotopic to some loop in $v_n$. Moreover, this path is  of length at most $2n$.

We first treat the case of a single stabilization. Consider the stabilization of $\Sigma_1$ produced by cutting out the disk $\phi^{-1}(D)$ gluing on a stabilizing surface to the resulting boundary component. Then $H_{\gamma_1}^{g+3}$ has a pants decomposition given by the curves in $\phi^{-1}(v_1')$ along with $\phi^{-1}(\partial D)$ and the pants decomposition for the stabilizing surface shown in Figure \ref{fig:PuncturedStabilizingSurface}. By gluing on a stabilizing surface to $\partial D$, we can extend $\phi$ to a map $\phi^{g+3}: \Sigma_1^{g+3} \to \Sigma_2^{g+3}$ such that $\phi^{g+3}(w) \cup \partial D \cup \phi^{g+3}$(the curves shown in Figure \ref{fig:PuncturedStabilizingSurface}) is a pants decomposition for $H_{\gamma 2}^{g+3}$. Since the path in $C^*(\Sigma_1 \backslash D)$ from $v_1'$ to $w$ takes place away from the stabilizing surface it corresponds to a path in $C^*(\Sigma_2^{g+3})$ so that $D(T_1^{g+3},T_2^{g+3}) \leq 2D(T_1,T_2)$. To achieve the more general result, simply connect sum multiple stabilizing surfaces first before connect summing with the given trisection surfaces.
\end{proof}

\begin{lemma}
For sufficiently large g, $D(T_1^h,T_2^h) \leq  D(T_1^g,T_2^g)$ when $h \geq g$.
\end{lemma}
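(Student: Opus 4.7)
The plan is to convert a minimum-length path realizing $D(T_1^g, T_2^g)$ into a path of equal length on the further-stabilized surface $\Sigma_2^h$, by applying the moreover clause of Lemma~\ref{lem:PathInPuncturedSurface} to puncture at a loop that is never moved, and then gluing in the additional stabilizing surfaces as in the previous lemma.

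The key input is a uniform bound: the previous lemma gives $D(T_1^g, T_2^g) \leq 2 D(T_1, T_2) =: M$ for every $g$. I choose $g$ large enough that $3g - 3 > M$, which is the precise meaning of ``sufficiently large.'' Now fix a distance-minimizing map $\phi$ and a minimum-length path in $C^*(\Sigma_2^g)$ realizing $D(T_1^g, T_2^g) = N \leq M$, with initial vertex $v = \hat{\phi}(v_1)$ for some $v_1$ defining $H_{\gamma 1}^g$ and terminal vertex $w$ defining $H_{\gamma 2}^g$. Each edge of the path replaces exactly one curve, so at most $N$ of the $3g - 3$ curves of $v$ can ever be removed along the path; since $N < 3g - 3$, there exists a curve $l \in v$ that lies in every vertex of the path, i.e., is never moved.

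This unmoved loop is exactly the hypothesis of the moreover clause of Lemma~\ref{lem:PathInPuncturedSurface}, which produces a disk $D \subset \Sigma_2^g$ lying in an annular neighborhood of $l$, together with a path of the same length $N$ in $C^*(\Sigma_2^g \setminus D)$ whose endpoints cap off to $v$ and $w$. Writing $h = g + 3n$, I would glue $n$ copies of the punctured stabilizing surface of Figure~\ref{fig:PuncturedStabilizingSurface} along $\partial D$ and augment every vertex of the path by the fixed pants decomposition shown there together with $\partial D$. Since none of the newly added curves is ever moved, the result is a path of length $N$ in $C^*(\Sigma_2^h)$. Extending $\phi$ to $\phi^h$ by the identity on the stabilizing summands, the endpoints of this extended path take the form $\widehat{\phi^h}(v_1')$ for some $v_1'$ defining $H_{\gamma 1}^h$ and a vertex defining $H_{\gamma 2}^h$, so $D(T_1^h, T_2^h) \leq N = D(T_1^g, T_2^g)$.

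The delicate point is the counting step: the existence of an unmoved loop requires $3g - 3 > N$, and the only way I see to guarantee this uniformly over all $h \geq g$ is to invoke the $g$-independent bound $M = 2 D(T_1, T_2)$ from the previous lemma. With that in hand, the remainder is essentially a direct reprise of the stabilization construction from the previous lemma, except that the moreover clause eliminates the factor-of-two penalty and yields the sharp inequality required here.
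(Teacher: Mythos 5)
Your proof is correct and follows essentially the same route as the paper: use the uniform bound $2D(T_1,T_2)$ from the previous lemma to choose $g$ with $3g-3$ exceeding the path length, deduce an unmoved loop by counting, invoke the moreover clause of Lemma~\ref{lem:PathInPuncturedSurface}, and lift the resulting equal-length path to $\Sigma^h$ via the stabilizing-surface gluing. The only difference is that you spell out the counting step and the endpoint bookkeeping that the paper leaves implicit.
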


\begin{proof}
By the previous lemma, $D(T_1^g,T_2^g) \leq 2D(T_1,T_2)$. Choose g so that $3g-3 > 2D(T_1,T_2)$. Since a pants decomposition of $\Sigma_2^g$ consists of $3g-3$ loops, it follows that some loop is never moved in the path on $\Sigma^g$. In this case, we conclude by Lemma \ref{lem:PathInPuncturedSurface} that paths on $\Sigma^g$ from $\hat{\phi^g}(H^g_{\gamma1})$ to $H^g_{\gamma2}$ lift to paths of the same length on $\Sigma^h$ from  $\hat{\phi^h}(H^h_{\gamma1})$ to $H^h_{\gamma2}$.
\end{proof}

\welldefined

\begin{proof}
Since the sequence $D(T_1^g, T_2^g)$ is natural number valued and non-increasing for sufficiently large g, it converges. Furthermore, by Theorem \ref{thm:TrisectionsStabilize}  any two trisections of the same manifold have a common stabilization fixing the labels of the handlebodies. Therefore, if $T_1$ and $T_3$ are distinct trisections of $M_1$, and $T_2$ and $T_4$ are distinct trisections of $M_2$ then there exists an $h$ so that $T_1^h$ is isotopic to $T_3^h$ and $T_2^h$ is isotopic to $T_4^h$. Then for $g>h$ we have that $D(T_1^g, T_2^g) = D(T_3^g, T_4^g)$ so that $\Lim{g \to \infty}D(T_1^g, T_2^g) = \Lim{g \to \infty}D(T_3^g, T_4^g)$.
\end{proof}

\begin{remark}
The reader may be concerned that the definition seems to distinguish between which third of the trisection is labeled $X_1$, whereas we have seemingly defined an invariant of a 4-manifold which is not sensitive to this information. However, Theorem \ref{thm:wellDefined} does actually encompass this case. Suppose $M_1$ has two trisections of the form $T_1 = (X_1,X_2,X_3)$ and $T_2 = (Y_1,Y_2,Y_3)$ such that $Y_i= X_{i-1}$ with indices taken mod 3. In \cite{GK}, it is shown that any two trisections of the same manifold have a common stabilization fixing the labels of the sectors. We therefore have a map $f:M^4 \to M^4$ isotopic to the identity so that $f(Y_i^h) = X_i^h$.
\end{remark}

We are now justified in making the following definitions.

\begin{definition}
Let $M_1$ and $M_2$ be two 4-manifolds which have $(g,k)$-trisections for the same $g$ and $k$. The \textbf{dual distance} between $M_1$ and $M_2$ is $\Lim{g \to \infty}D(T_1^g, T_2^g)$ where $T_1$ is any trisection of $M_1$ and $T_2$ is any trisection of $M_2$ with the same parameters as $T_1$. Similarly, the \textbf{pants distance} between two 4-manifolds is $\Lim{g \to \infty}D^P(T_1^g, T_2^g)$.
\end{definition}

\begin{remark} 
It should be noted that in the 3-manifold case, any two pants decompositions will determine a 3-manifold, so that minimal paths between two 3-manifolds pass through intermediary 3-manifolds. This nice property simplifies many of the arguments in \cite{JJ}. In our set up, we can not guarantee that $H_{\alpha2}$, $H_{\beta2}$, and the handlebody determined by an intermediary vertex in a minimal path between $\hat{\phi}(H_{\gamma1})$ and $H_{\gamma2}$ will still pairwise form Heegaard splittings for $\#^{k} S^1 \times S^2$. Therefore, these three handlebodies may not form the spine of a trisection, and there may be no way to uniquely obtain a closed 4-manifold from this information. This leads to two natural questions: 1) Can we pass between trisections though paths whose intermediary vertices form trisections? 2) Is there any significance to the 3 handlebodies which occur in paths between two trisections?
\end{remark}

It is clear from the definitions that trisections, $T_1$ and $T_2$, can only be compared when $(g_1,k_1) = (g_2,k_2)$. However, it is not immediately obvious when two manifolds can be compared. A necessary and sufficient condition for comparing $M_1$ and $M_2$ is that both manifolds have a $(g,k)$-trisection for some $g$ and $k$. 
If a 4-manifold has a $(g,k)$-trisection, the Euler characteristic is given by $\chi(M)= 2+g-3k$, so it is necessary that $\chi(M_1) = \chi(M_2)$. The following straightforward lemma shows that this is also a sufficient condition.

\begin{lemma}
$D(M_1,M_2)$ and $D^P(M_1,M_2)$ are well defined whenever $\chi(M_1) = \chi(M_2)$.
\end{lemma}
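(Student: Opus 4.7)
The plan is to reduce everything to an arithmetic check: stabilization preserves the quantity $g - 3k$, and $\chi(M) = 2 + g - 3k$, so two manifolds with the same Euler characteristic can always be given balanced trisections with matching parameters, at which point the limits defining $D(M_1,M_2)$ and $D^P(M_1,M_2)$ make sense and Theorem~\ref{thm:wellDefined} delivers the conclusion.

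More explicitly, I would argue as follows. By the Gay--Kirby existence theorem, pick any balanced $(g_1,k_1)$-trisection $T_1$ of $M_1$ and any balanced $(g_2,k_2)$-trisection $T_2$ of $M_2$. From $\chi(M_1) = \chi(M_2)$ together with the formula $\chi(M) = 2 + g - 3k$, one reads off $g_1 - 3k_1 = g_2 - 3k_2$. Without loss of generality assume $k_1 \le k_2$; then $g_1 \le g_2$ and $g_2 - g_1 = 3(k_2 - k_1)$ is automatically a non-negative multiple of $3$. Setting $n = k_2 - k_1$, the $n$-fold stabilization $T_1^{g_2}$ is a balanced $(g_1+3n,\,k_1+n) = (g_2,k_2)$-trisection of $M_1$, so that $T_1^{g_2}$ and $T_2$ share the same parameters $(g,k) = (g_2,k_2)$.

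At this point $D(T_1^{g_2},T_2)$ and $D^P(T_1^{g_2},T_2)$ are honest natural numbers by the definition given at the start of Section~2, and iterating stabilizations on both sides yields a sequence $\{D((T_1^{g_2})^g, T_2^g)\}$ (and its pants analogue) of $(g,k)$-trisection distances of $M_1$ and $M_2$. Theorem~\ref{thm:wellDefined} then ensures these sequences converge to limits depending only on $M_1$ and $M_2$, which is exactly the content of the definition of $D(M_1,M_2)$ and $D^P(M_1,M_2)$.

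There is no real obstacle: the only nontrivial ingredient is that stabilization increases $g$ by $3$ and $k$ by $1$, keeping $g - 3k$ fixed, so the arithmetic condition $\chi(M_1) = \chi(M_2)$ is precisely what is needed to match up the parameters of some pair of trisections. (One should also verify that the resulting invariants do not depend on the particular order in which parameters were matched --- for instance, whether one stabilizes $T_1$ to meet $T_2$ or vice versa --- but this is immediate from Theorem~\ref{thm:wellDefined} since the limit is independent of the initial choice of trisections of $M_1$ and $M_2$.)
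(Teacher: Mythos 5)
Your argument is correct and is essentially identical to the paper's proof: both reduce the statement to the arithmetic observation that $\chi(M)=2+g-3k$ forces $g_1-3k_1=g_2-3k_2$, so stabilizing the trisection with the smaller $k$ exactly $|k_1-k_2|$ times matches the parameters, after which Theorem~\ref{thm:wellDefined} gives well-definedness of the limit. No gaps.
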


\begin{proof}
Let $M_1$ have a $(g_1, k_1)$-trisection, $T_1$, and let $M_2$ have a $(g_2, k_2)$-trisection, $T_2$. Now since $\chi(M_1) = \chi(M_2)$, $2+g_1-3k_1 = 2+g_2-3k_2$. Without loss of generality, assume $k_1>k_2$. Then by stabilizing $T_2$ $(k_1 - k_2)$ times we get a new trisection of $M_2$, $T_2'$ with $k_2' = k_1$ and $g_2' = g_2+3(k_1-k_2) = g_2+(g_1-g_2) = g_1$, hence these two trisections can be compared.
\end{proof}

It is natural to ask whether these distances induce a metric on the set of 4-manifolds with the same Euler characteristic. It follows quickly from the definition that these distances are 0 if and only if the manifolds are diffeomorphic. These distances are also symmetric, since if $\phi$ is the distance minimizing map for $D(M_1,M_2)$ (or $D^P(M_1,M_2)$), then $\phi^{-1}$ minimizes the distance  $D(M_2,M_1)$ (respectively $D^P(M_2,M_1)$). The triangle inequality, however, is likely false. This is due to the fact that we are minimizing over handlebody sets of infinite diameter in the complexes. Given three handlebodies $H_1$, $H_2$, and $H_3$, the representatives of $H_2$ closest to $H_1$ and may be far away from the representative of $H_2$ closest to $H_3$.

\section{Some Bounds}

\begin{lemma}
\label{connectSums}
If $D(M_1, M_2) = n$ and $D(M_3, M_4) = m$, then $D(M_1 \# M_3, M_2 \# M_4) \leq n+m$.
\end{lemma}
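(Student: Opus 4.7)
The approach is to splice together distance-realizing paths on the two factor surfaces into a single path on the connect-sum surface.

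First, I would choose trisections $T_i$ of $M_i$ and stabilization parameters $g_A, g_B$ large enough that $D(T_1^{g_A}, T_2^{g_A}) = n$ and $D(T_3^{g_B}, T_4^{g_B}) = m$; this is possible by Theorem~\ref{thm:wellDefined}, since each sequence is eventually constant. The connect sums $T_1^{g_A} \# T_3^{g_B}$ and $T_2^{g_A} \# T_4^{g_B}$ are then trisections of $M_1 \# M_3$ and $M_2 \# M_4$ with matching parameters, and since stabilization is by definition connect sum with the standard $(3,1)$-trisection of $S^4$, they agree up to isotopy with $(T_1 \# T_3)^{g_A + g_B}$ and $(T_2 \# T_4)^{g_A + g_B}$.

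Next, I would take distance-minimizing maps $\phi_A \colon H_{\alpha 1}^{g_A} \cup H_{\beta 1}^{g_A} \to H_{\alpha 2}^{g_A} \cup H_{\beta 2}^{g_A}$ and $\phi_B \colon H_{\alpha 3}^{g_B} \cup H_{\beta 3}^{g_B} \to H_{\alpha 4}^{g_B} \cup H_{\beta 4}^{g_B}$, along with minimal paths $v_0, \dots, v_n$ in $C^*(\Sigma_2^{g_A})$ and $u_0, \dots, u_m$ in $C^*(\Sigma_4^{g_B})$, where $v_0 = \hat\phi_A(w_A)$ and $u_0 = \hat\phi_B(w_B)$ for vertices $w_A, w_B$ defining $H_{\gamma 1}^{g_A}$ and $H_{\gamma 3}^{g_B}$, and $v_n, u_m$ define $H_{\gamma 2}^{g_A}$ and $H_{\gamma 4}^{g_B}$. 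The boundary-connect-sum map $\phi_A \# \phi_B$ then identifies the appropriate Heegaard splittings on the spine boundary of $T_1^{g_A} \# T_3^{g_B}$. By enlarging $g_A, g_B$ if needed, a simple pigeon-hole count (at most $2n$ moved curves are adjacent to at most $4n$ of the $2g_A - 2$ pairs of pants) guarantees a pair of pants in $v_0$ whose three boundary curves are fixed throughout the $v$-path, and similarly for $u_0$. Performing the connect sum inside such pants yields a pants decomposition of $\Sigma_2^{g_A} \# \Sigma_4^{g_B}$ consisting of the curves of $v_0$, the curves of $u_0$, the connect-sum (neck) curve, and two auxiliary curves parallel to boundary components of the chosen pants; all of these bound disks in $H_{\gamma 2}^{g_A} \natural H_{\gamma 4}^{g_B}$.

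Finally, I would concatenate the two paths into a single path on $\Sigma_2^{g_A} \# \Sigma_4^{g_B}$: first execute the $n$ moves of the $v$-path while holding the $u_0$-curves and the three connect-sum-region curves fixed, then execute the $m$ moves of the $u$-path. Each move is a valid edge of $C^*(\Sigma_2^{g_A} \# \Sigma_4^{g_B})$ because the swapped curve is disjoint from every other curve in the decomposition. The endpoints of the path are the images under $\widehat{\phi_A \# \phi_B}$ of vertices defining the $\gamma$-handlebodies of the two connect-sum trisections, which yields $D((T_1 \# T_3)^{g_A + g_B}, (T_2 \# T_4)^{g_A + g_B}) \leq n + m$. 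Since $g_A + g_B$ may be chosen arbitrarily large, passing to the limit gives $D(M_1 \# M_3, M_2 \# M_4) \leq n + m$. The principal obstacle is arranging the connect-sum region so that the two paths act on disjoint portions of the surface; stabilizing to high enough genus provides the reservoir of fixed curves needed to make this possible.
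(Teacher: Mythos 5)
Your proposal is correct and follows essentially the same route as the paper: stabilize until the distances are realized by paths that leave part of the surface untouched, perform the connect sum in that untouched region, and concatenate the two paths (adding the neck curve) to get a path of length $n+m$. The only real difference is cosmetic: where the paper invokes Lemma~\ref{lem:PathInPuncturedSurface} (finding an unmoved loop and passing to the punctured surface), you re-derive the needed fact directly by a pigeonhole count locating an unmoved pair of pants.
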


\begin{proof}
Stabilize trisections of $M_1$ and $M_2$ to genus $g$ trisections, $(T_1^g, \Sigma_1^g)$ and $(T_2^g, \Sigma_2^g)$, with $3g-3 > n$. Also, stabilize trisections of $M_3$ and $M_4$ to genus $h$ trisections,$(T_3^h, \Sigma_3^h)$ and $(T_2^h, \Sigma_2^h)$, with $3h-3 > m$. Since a pants decomposition for $\Sigma_2^g$ has $3g-3$ loops, it follows that some loop in the path from $\hat{\phi}^g(H_{\gamma1}^g)$ to $H_{\gamma2}^g$ is never moved, where $\hat{\phi}$ is the distance minimizing map. Let $v_1', v_2',...,v_n'$ be the path in the dual curve complex (or the pants complex) guaranteed by Lemma \ref{lem:PathInPuncturedSurface} on $C^*(\Sigma_2^g \backslash D)$, and let  $w_1', w_2',...,w_n'$ be the path guaranteed by the same lemma on $C^*(\Sigma_4^h \backslash D')$. 

Form the connect sum, $\Sigma_2^g \# \Sigma_4^h$ along the disks $D$ and $D'$. Let $v_i' \cup w_j'$ be the pants decomposition of $\Sigma_2^g \# \Sigma_4^h$ consisting of the pants decomposition for  $\Sigma_2^g$ induced by $v_i'$, the pants decomposition for $\Sigma_4^h$ induced by $w_j'$, along with the additional curve $\partial D = \partial D'$. Then the path $v_1' \cup w_1', v_2' \cup w_1',...,v_n' \cup w_1', v_n' \cup w_2'... v_n' \cup w_m'$ is a path of length $n+m$ from $M_1 \# M_3$ to $M_2 \# M_4$.
\end{proof}

\begin{corollary}
\label{char2}
For any N with $\chi(N) = 2,$ $D(M_1 \# N, M_2 )\leq  D(M_1, M_2)+ D(N, S^4)$
\end{corollary}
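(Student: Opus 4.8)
The plan is to deduce Corollary~\ref{char2} directly from Lemma~\ref{connectSums} by making the right substitution. The key observation is that a $4$-manifold $N$ has Euler characteristic $2$ precisely when it admits a $(g,k)$-trisection with $g=3k$, i.e.\ a trisection with the same parameters as the $k$-fold stabilization of the standard genus-$0$ trisection of $S^4$; in other words, $N$ and $S^4$ can always be compared, so $D(N,S^4)$ is defined. So I first note that $D(N,S^4)$ makes sense by the previous lemma (well-definedness whenever Euler characteristics agree, here both equal to $2$).

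Next I would apply Lemma~\ref{connectSums} with $M_3 = N$ and $M_4 = S^4$. That gives $D(M_1\#N,\ M_2\#S^4)\leq D(M_1,M_2) + D(N,S^4)$. The only remaining point is to identify $M_2\#S^4$ with $M_2$: connected sum with $S^4$ is the identity operation on (diffeomorphism classes of) closed oriented $4$-manifolds, so $M_2 \# S^4 \cong M_2$, and hence $D(M_1\#N, M_2\#S^4) = D(M_1\#N, M_2)$ since these distances are diffeomorphism invariants (as established right after the definition: the distance is $0$ iff the manifolds are diffeomorphic, and more generally stabilizing within a fixed manifold does not change the distance). Substituting yields exactly the claimed inequality.

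I expect the only genuine subtlety — and the step I would write most carefully — is the compatibility of trisection parameters: Lemma~\ref{connectSums} is stated for $D$ of connected sums, but to invoke it I need $D(N,S^4)$ and $D(M_2,S^4)$-type comparisons to be legitimate, which requires that all four manifolds in play can be stabilized to a common $(g,k)$. Since $\chi(M_1) = \chi(M_2)$ is the standing hypothesis for $D(M_1,M_2)$ to be defined, and $\chi(N) = \chi(S^4) = 2$, we get $\chi(M_1 \# N) = \chi(M_1) + \chi(N) - 2 = \chi(M_1) = \chi(M_2) = \chi(M_2 \# S^4)$, so every comparison invoked is valid by the well-definedness lemma. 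Beyond that, the proof is a one-line substitution into Lemma~\ref{connectSums} together with $M_2 \# S^4 \cong M_2$; there is no real obstacle.
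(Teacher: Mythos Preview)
Your proposal is correct and is exactly the intended deduction: the paper states this as an immediate corollary of Lemma~\ref{connectSums} with no separate proof, and the substitution $M_3=N$, $M_4=S^4$ together with $M_2\#S^4\cong M_2$ is precisely the argument. Your check that all Euler characteristics match so that every distance invoked is defined is the only point worth writing down, and you have it.
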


The question of whether $D(M_1 \# M_3, M_2 \# M_4) = n+m$ is quite easily shown to be false. For example, if $M_1$ and $M_2$ are homeomorphic, but not diffeomorphic, 4-manifolds which become diffeomorphic after a single connected sum with $S^2 \times S^2$, then  $D(M_1, M_2) \neq 0$ whereas $D(M_1 \# S^2 \times S^2, M_2 \# S^2 \times S^2) = 0.$

%\begin{corollary} 
%If for all $i$ and $j$, $\chi(M_i) = \chi(M_j).$ Then $D(M_1 \# M_3, M_2 \# M_4)\leq min \{ D(M_1, M_2)+ D(M_3, M_4), D(M_1, M_4)+ D(M_2, M_3) \}$.
%\end{corollary}

We next seek to prove a lower bound on the distance between two manifolds based on the difference of their signatures. To this end, we briefly discuss how this information can be recovered from a trisection. Given a genus g surface $\Sigma,$ choose a symplectic basis for $H_1(\Sigma_g, \mathbb{R})$. That is, a basis $\{a_1,b_1,... a_g,b_g\}$ so that for all $i$ and $j$, $|a_i \cap a_j| = |b_i \cap b_j| = 0$ and $|a_i \cap b_j| = \delta_{ij}$, and let $\omega$ be the associated symplectic form on $\mathbb{R}^{2g}$. 

Given a trisection with spine $H_1 \cup H_2 \cup H_3$ we get 3 Lagrangian subspaces of $H_1(\Sigma_g, \mathbb{R})$  given by $L_i = ker(i_*:H_1(\Sigma_g, \mathbb{R}) \to H_1(H_i, \mathbb{R})$. We may define a symmetric bilinear form, q, on $L_1 \oplus L_2 \oplus L_3$ by $q((x_1,x_2,x_3), (y_1,y_2,y_3))= \omega(x_1, y_2) + \omega(y_1, x_2) + \omega(x_2, y_3) + \omega(y_2, y_3) + \omega(x_3, y_1) + \omega(y_3, x_1) + \omega(x_3, y_1)$. In \cite{GK}, it is observed that the signature of the matrix associated to this bilinear form is the signature of the original 4-manifold. While intermediary vertices in minimal paths between handlebodies will not always define trisections, the signature of this matrix will always be well defined. As a result, we obtain the following proposition.

\bound

\begin{proof}
Suppose $D(M_1, M_2) = n$ with a path $v_1, ... v_n$, so that $v_1$ defines $\phi(H_{\gamma1})$ and $v_n$ defines $H_{\gamma2}$. At each vertex, we have a triple of handlebodies determined by $H_{\alpha2}$, $H_{\beta2},$ and the handlebody determined by $v_i$. These in turn determine three Lagrangian subspaces of $H_1(\Sigma_2, \mathbb{R})$: $L_1$, $L_2$, and $L_{v_{i}}$. Going from $v_i$ to $v_{i+1}$ involves changing a single curve in the pants decomposition so that $L_{v_{i}}$ and $L_{v_{i+1}}$ have bases in $H_1(\Sigma_2, \mathbb{R})$ which are the same except for possibly one vector. 

Let $M_i$ be the matrix corresponding to the symmetric bilinear form $q_i$ on $L_1 \oplus L_2 \oplus L_{v_i}$. Then $M_i$ has real eigenvalues $\lambda_1 \leq \lambda_2 ... \leq \lambda_{3g}$. Let $a_1,... a_g$ be a basis for $L_{v_i}$. In going from $L_{v_{i}}$ to $L_{v_{i+1}}$, it is possible that none of the basis vectors are changed, in which case the signature of the matrix is obviously unchanged. It is also possible that one vector, say $a_j$ is changed. Let $M_i'$ be the matrix obtained by deleting the row and column corresponding to $a_j$, and let $\lambda_1' \leq \lambda_2' ... \leq \lambda_{3g-1}'$ be its eigenvalues. By the Cauchy interlacing theorem, $\lambda_1 \leq \lambda_1' \leq \lambda_2 \leq \lambda_2' ... \leq \lambda_{3g-1}' \leq \lambda_{3g}$ so that $|\sigma(M_i) - \sigma(M_i')| \leq 1$. Similarly, we may obtain $M_i'$ by deleting a row and column of $M_{i+1}$ so that $|\sigma(M_{i+1}) -  \sigma(M_i')| \leq 1$. Therefore, $|\sigma(M_{i+1}) - \sigma(M_i)| \leq 2$. The result immediately follows. 
\end{proof}

Comparing the standard $(g,0)-$trisections of $\#^g\mathbb{C}P^2$ and  $\#^g \overline{\mathbb{C}P^2}$ one can see that this bound is sharp. Moreover, we may conclude that $\Lim{g \to \infty}D(\#^g\mathbb{C}P^2, \#^g \overline{\mathbb{C}P^2}) = \infty$.

\section{Nearby Manifolds}

\begin{figure}
\centering
\includegraphics[scale=.4]{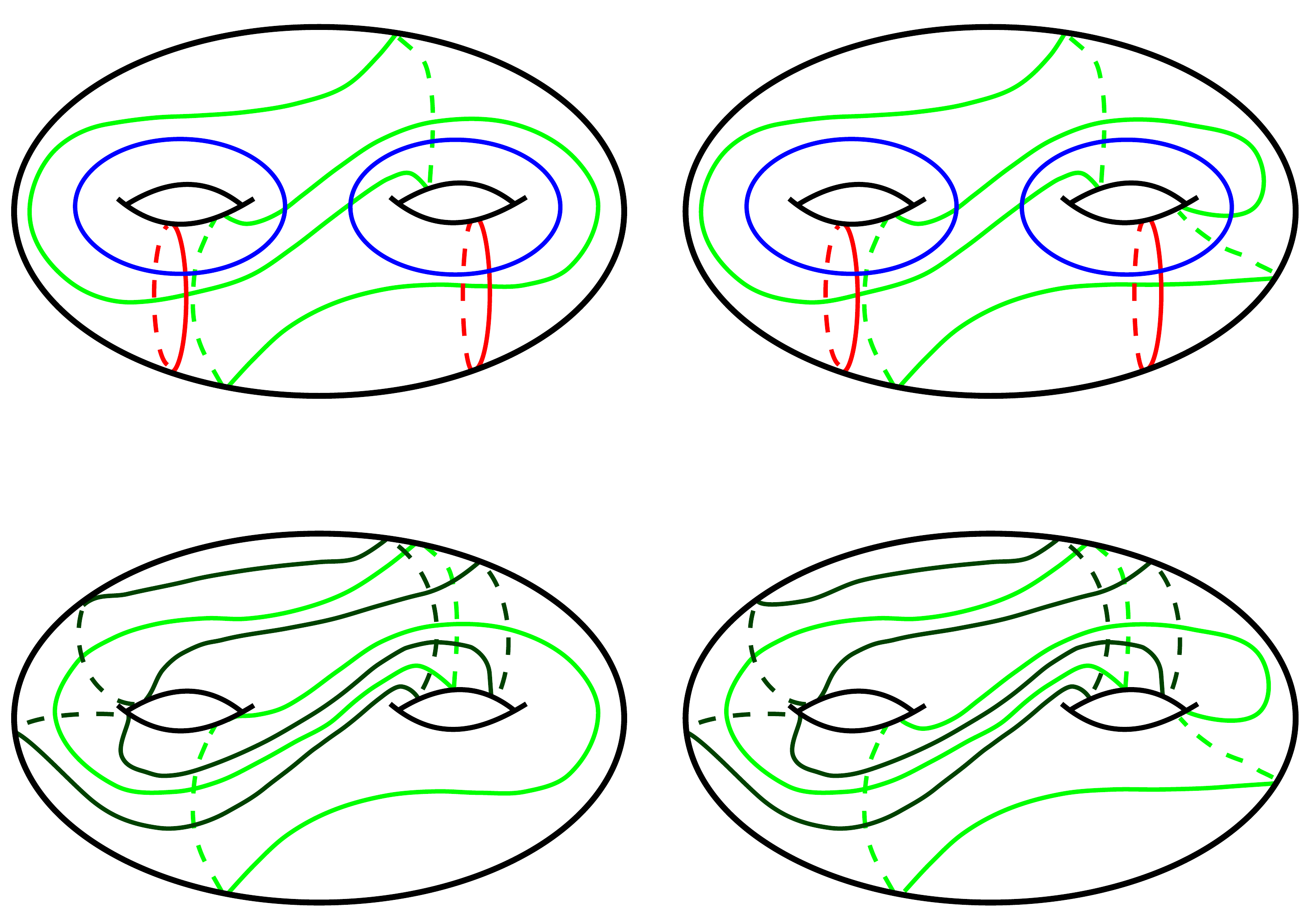}
\caption{$S^2\times S^2$ and $S^2 \widetilde{\times} S^2$  are distance one in the pants complex }
\label{fig:S2xS2DistanceOne}
\end{figure}

We next seek to build some intuition as to what it means when 4-manifolds are close to each other with respect to our distances. We first consider an illustrative example. The top of Figure \ref{fig:S2xS2DistanceOne} shows trisection diagrams $T_1$ for $S^2\times S^2$ and $T_2$ for $S^2 \widetilde{\times} S^2$ where the $\alpha$ and $\beta$ handlebodies are identical. Below that are pants decompositions for the $\gamma$ handlebodies which are identical except for in one curve. The curves which are different intersect exactly once, showing that $D^P(T_1,T_2)=1$. Moreover, we have 2 curves in the pants decomposition which never move, so by Lemma \ref{lem:PathInPuncturedSurface}, the path lifts to new paths of distance one on all stabilizations. Since these manifolds are non-diffeomorphic, we may therefore conclude that $D^P(S^2\times S^2,S^2 \widetilde{\times} S^2)=1$. 

In \cite{GK}, it is shown how to obtain a Kirby diagram from a trisection diagram, and these particular trisection diagrams give rise to Kirby diagrams which are identical except for in the framing of a 2-handle. We seek to show that this is in fact the case in general. That is, if $D^P(M_1, M_2) = 1$ then $M_1$ and $M_2$ have Kirby diagrams which are identical except for in the framing of some  2-handle. To do this, we first consider what it means for two handlebodies to be distance one apart in the pants complex.

\begin{lemma}
\label{DistanceOneHandlebodies}
Let $H_1$ and $H_2$ be two genus g handlebodies with boundary $\Sigma$. If $D^P(H_1, H_2) = 1$ then the manifold $H_1 \cup_\Sigma H_2 \cong \#^{g-1}S^1 \times S^2$. 
\end{lemma}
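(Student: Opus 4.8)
The plan is to reduce the statement to reading off a Heegaard diagram of $H_1\cup_\Sigma H_2$ from the two pants decompositions, and then to separate the two types of elementary move that can relate them.

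First I would fix defining pants decompositions $v_1$ of $H_1$ and $v_2$ of $H_2$ realizing $D^P(H_1,H_2)=1$, so that $v_1$ and $v_2$ agree on $3g-4$ curves, all of which bound disks in \emph{both} handlebodies; write $l$ for the curve of $v_1$ missing from $v_2$ and $l'$ for the curve of $v_2$ missing from $v_1$. Deleting $l$ from $v_1$ merges the two pairs of pants adjacent to $l$ into one complementary piece $F$ containing $l$ and $l'$, and $F$ is either a once–punctured torus (the $S$-move case, $|l\cap l'|=1$) or a four–holed sphere (the $A$-move case, $|l\cap l'|=2$). The main tool is the ``handlebody graph'' of a defining pants decomposition: if every curve of a pants decomposition $P$ of $\partial H$ bounds a disk in $H$, then cutting $H$ along the $3g-3$ disks produces $2g-2$ balls and $H$ is reassembled by attaching dual $1$-handles, giving a graph $G_P$ with the balls as vertices and the disks as edges; the sub-collections of $P$ that form cut systems of $H$ are precisely the complements of spanning trees of $G_P$.

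In the $S$-move case the curve $l$ appears as a loop edge of $G_{v_1}$, since its two sides lie on the single pair of pants obtained by cutting the once–punctured torus $F$ along $l$; likewise $l'$ is a loop edge of $G_{v_2}$. The graphs $G_{v_1}$ and $G_{v_2}$ coincide after deleting these loop edges and the (bridge) edge $\partial F$, so I would choose a single spanning tree of the common part and conclude that there are $g-1$ \emph{common} curves $d_1,\dots,d_{g-1}$ with $\{d_1,\dots,d_{g-1},l\}$ a cut system for $H_1$ and $\{d_1,\dots,d_{g-1},l'\}$ a cut system for $H_2$; all of these curves are pairwise disjoint except $|l\cap l'|=1$. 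Hence $H_1\cup_\Sigma H_2$ has a Heegaard diagram consisting of $g-1$ parallel pairs together with one pair meeting once, i.e. the connected sum of $\#^{g-1}S^1\times S^2$ with the genus-one splitting of $S^3$, which is $\#^{g-1}S^1\times S^2$.

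It remains to see that the $A$-move case cannot occur under the hypothesis $D^P(H_1,H_2)=1$. There $l$ joins two \emph{distinct} vertices of $G_{v_1}$, so it belongs to some spanning tree, and the complementary cut system of $H_1$ consists entirely of common curves; these $g$ disjoint curves are linearly independent on $\Sigma$ and bound disks in $H_2$, hence also form a cut system for $H_2$. Extending this common cut system to a pants decomposition by pants-decomposing the complementary $2g$-holed sphere yields a pants decomposition all of whose new curves bound disks in both handlebodies, so it defines $H_1$ and $H_2$ simultaneously and forces $D^P(H_1,H_2)=0$, a contradiction.

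I expect the main obstacle to be the combinatorial step in the $S$-move case: verifying that the handlebody graphs of $v_1$ and $v_2$ agree outside a neighbourhood of the loop edges $l$ and $l'$, so that one genuinely obtains the \emph{same} curves $d_1,\dots,d_{g-1}$ extending to cut systems on both sides. The remaining ingredients — the handlebody-graph description of cut systems, and the identification of a diagram made of parallel pairs plus a once-meeting pair with $\#^{g-1}S^1\times S^2$ — are standard.
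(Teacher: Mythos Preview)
Your argument is correct and follows the same outline as the paper's proof: rule out the $A$-move because it does not change the handlebody, and in the $S$-move case exhibit a Heegaard diagram with $g-1$ parallel pairs together with $l,l'$ meeting once. The paper does this in three sentences, simply asserting that ``A-moves do not change the handlebody'' and that one may ``build a Heegaard diagram \dots\ consisting of $g-1$ identical loops in both $v_1$ and $v_2$, along with $l_1$ and $l_2$''; your handlebody-graph formalism is exactly what is needed to make those two assertions precise, and the obstacle you flag is not a real one---since $G_{v_1}\setminus\{l\}$ and $G_{v_2}\setminus\{l'\}$ are literally the same graph (same vertices, same remaining edges), any spanning tree of it produces the common $d_1,\dots,d_{g-1}$ you want.
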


\begin{proof}
Let $v_1,v_2 \in P(\Sigma)$ define $H_1$ and $H_2$ respectively with $D^P(v_1,v_2) = 1$. The pants decompositions corresponding to these vertices are exactly the same except for some loops $l_1 \in v_1$ and $l_2 \in v_2$. Moreover, since A-moves do not change the handlebody and $H_1 \neq H_2$ we know that $l_1$ and $l_2$ lie in a punctured torus with $|l_1 \cap l_2| = 1$. We may therefore build a Heegaard diagram for $H_1 \cup_\Sigma H_2$  consisting of $g-1$ identical loops in both $v_1$ and $v_2$, along with $l_1$ and $l_2$. It is easy to see that this is a once stabilized splitting for $\#^{g-1}S^1 \times S^2$.
\end{proof}

Genus g Heegaard splittings of $\#^{g-1}S^1 \times S^2$ are in some sense the second most simple Heegaard splittings in a given genus after $\#^{g}S^1 \times S^2$. Genus g trisections where two of the handlebodies form   $\#^{g}S^1 \times S^2$ are easily shown to be diffeomorphic to $\#^g S^1 \times S^3$. Given these facts, it would be reasonable to assume that genus g trisections where two of the handlebodies form  $\#^{g-1}S^1 \times S^2$ are also relatively simple. The following theorem of \cite{MSZ} pertaining to unbalanced trisections makes this precise.

\begin{theorem} (Theorem 1.2 of \cite{MSZ})
\label{(g-1)-Trisections}
Suppose that M admits a $(g;g-1,k_2,k_3)$-trisection $T$, and let $k' = max\{k_2,k_3\}.$ Then M is diffeomorphic to either $\#^{k'}S^1 \times S^3$ or to the connect sum of $\#^{k'}S^1 \times S^3$  with one of either $\mathbb{C}P^2$ or $\overline{\mathbb{C}P^2}$, and $T$ is a connect sum of genus 1 trisections. 
\end{theorem}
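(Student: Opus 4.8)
The overall plan is to induct on the genus $g$, peeling off a genus-$1$ summand of the trisection at each stage. Write $T = X_1 \cup X_2 \cup X_3$ with $X_1 \cong \natural^{g-1}S^1 \times D^3$, fix a trisection diagram $(\Sigma,\alpha,\beta,\gamma)$ in which $(\alpha,\beta)$ is the genus-$g$ Heegaard diagram for $\partial X_1 \cong \#^{g-1}S^1\times S^2$, and assume without loss of generality that $k_2 \geq k_3$, so $k' = k_2$. The base case $g = 1$ is immediate: the trisection is already genus $1$, so by the Gay--Kirby classification of genus-$1$ trisections $M$ is one of $S^4$, $S^1 \times S^3$, $\mathbb{C}P^2$, $\overline{\mathbb{C}P^2}$, and comparing with the hypothesis $k_1 = 0$ gives the conclusion, the empty connected sum being read as a connected sum of a single genus-$1$ trisection.

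The inductive step rests on producing a separating curve $c \subset \Sigma$ which, after isotopy and handleslides, is disjoint from each of $\alpha$, $\beta$, $\gamma$ and cuts off a once-punctured torus $T_0$ containing exactly one curve of each of the three cut systems. Granting this, observe that an essential separating curve disjoint from a complete cut system is nullhomotopic in the corresponding handlebody and hence bounds a disk there; so $c$ bounds a disk in each of $H_{12}$, $H_{23}$, $H_{13}$ and, in the sense of Definition~3.8 of \cite{MSZ}, exhibits $T$ as a connected sum $T' \# T''$ with $T''$ genus $1$ and $T'$ a genus-$(g-1)$ trisection whose parameters are those of $T$ minus those of $T''$. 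Being genus $1$, $T''$ is one of the four trisections above. If the parameter of $T''$ in the first slot is $1$, then $T'$ has first parameter $(g-1)-1$ and the inductive hypothesis applies; if it is $0$, then $T'$ has first parameter equal to its genus $g-1$, so two of its handlebodies form $\#^{g-1}S^1 \times S^2$, and by the observation recalled just before the theorem $M' \cong \#^{g-1}S^1 \times S^3$, with $T'$ the corresponding standard trisection, a connected sum of genus-$1$ trisections. Assembling the genus-$1$ summands and tracking Euler characteristics and the $k$-parameters then yields the result: each $\mathbb{C}P^2$- or $\overline{\mathbb{C}P^2}$-summand contributes $0$ to the first parameter while every other genus-$1$ summand contributes at most $1$, and there are $g$ summands with first parameter $g-1$, so at most one summand is $\mathbb{C}P^2$ or $\overline{\mathbb{C}P^2}$; the remaining summands are copies of $S^4$ (which may be absorbed) and of $S^1 \times S^3$, and the count of the latter is pinned down by $k' = \max\{k_2,k_3\}$, giving $\#^{k'}S^1 \times S^3$ or its connected sum with a single $\mathbb{C}P^2$ or $\overline{\mathbb{C}P^2}$.

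The main obstacle is producing the curve $c$; everything else above is the genus-$1$ classification plus bookkeeping. By Waldhausen's theorem the genus-$g$ splitting $\partial X_1 \cong \#^{g-1}S^1 \times S^2$ is the unique stabilization of the minimal-genus splitting, so after handleslides and isotopies of $\alpha$ and $\beta$ alone --- which change neither the trisection nor $\gamma$ --- we may arrange that $\alpha_i$ is isotopic to $\beta_i$ for $i = 1,\dots,g-1$, that $|\alpha_g \cap \beta_g| = 1$, and that $\alpha_g, \beta_g$ are disjoint from all other curves. A regular neighborhood $N$ of $\alpha_g \cup \beta_g$ is a once-punctured torus with $\partial N$ disjoint from $\alpha$ and $\beta$, but $\gamma$ will in general meet $N$; the task is to enlarge $N$ to a once-punctured torus $T_0$ swallowing exactly one $\gamma$-curve, with $\partial T_0$ disjoint from all three cut systems. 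I would approach this by using handleslides among the $\gamma_j$ (permissible, since they preserve $H_{23}$) to minimize $|\gamma \cap N|$ and standardize the intersection arcs, and then arguing that in the minimal configuration $\gamma$ meets $N$ only in trivial arcs together with at most one curve parallel to $\alpha_g$ or $\beta_g$; the leverage is that the diagrams $(\alpha,\gamma)$ and $(\beta,\gamma)$ for $\partial X_3$ and $\partial X_2$ agree outside $N$ and differ only by the swap $\alpha_g \leftrightarrow \beta_g$ inside it, while each must remain a diagram of a connected sum of copies of $S^1 \times S^2$, which severely constrains how $\gamma$ can thread the stabilized handle. A more topological alternative is to realize the stabilizing handle of $\partial X_1$ as an embedded $2$-sphere in $M$ that is standard inside $X_1$, sweep it across the trisection in the style of a Rubinstein--Scharlemann or Gay--Kirby position argument, and use innermost-disk surgeries to isotope it to meet $\Sigma$ in a single curve bounding disks in all three handlebodies. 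Either way, controlling the third sector $H_{23}$ is where essentially all the difficulty lies.
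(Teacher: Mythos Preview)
The paper does not contain a proof of this statement: it is quoted verbatim as Theorem~1.2 of \cite{MSZ} and used as a black box in the proof of Theorem~\ref{thm:PantsDistanceOne}. There is therefore nothing in the present paper to compare your argument against.

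On its own merits, your overall architecture is the natural one and matches the strategy of \cite{MSZ}: induct on $g$, split off a genus-$1$ summand via a separating curve bounding disks in all three handlebodies, and finish with the genus-$1$ classification and parameter bookkeeping. The reduction and the counting at the end are essentially fine (one small imprecision: the genus-$1$ trisections with first parameter $0$ are not only $\mathbb{C}P^2$ and $\overline{\mathbb{C}P^2}$ but also the $(1;0,1,0)$- and $(1;0,0,1)$-trisections of $S^4$; the correct statement is that \emph{exactly one} of the $g$ summands has first parameter $0$, and that summand is either one of these $S^4$'s or a $\mathbb{C}P^{\pm 2}$, which still yields the desired conclusion).

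The genuine gap is exactly where you flag it: you do not actually produce the curve $c$. Putting $(\alpha,\beta)$ into standard stabilized form is fine, but the assertion that handleslides among the $\gamma_j$ can be arranged so that $\gamma$ meets the stabilizing torus $N$ only in trivial arcs together with at most one curve parallel to $\alpha_g$ or $\beta_g$ is the entire content of the theorem, and neither of your two sketches constitutes a proof. The combinatorial approach (``minimize $|\gamma \cap N|$ and argue the minimal configuration is standard'') requires a lemma controlling how a cut system that yields $\#^k S^1\times S^2$ with each of $\alpha$ and $\beta$ can intersect a destabilizing pair; this is not automatic and is precisely what \cite{MSZ} establishes. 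The topological alternative via sweeping a $2$-sphere through the trisection is even further from a proof as written. So as it stands your write-up is a correct outline with the key lemma left open.
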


%\begin{lemma}
%\label{lem:isotopicCurves}
%Suppose $l_1,$ $l_2,$ and $l_3$ are simple closed curves on a torus so that for $i \neq j$, $|l_i \cap l_j| = 1$. Let $H_{l_i}$ be the handlebody obtained by attaching a 2 handle to $l_i$ and capping off the result with a 3-ball. Then for any $i \neq j \neq k$, $l_i$ and $l_j$ are isotopic in  $H_{l_k}$
%\end{lemma}
%
%
%\begin{proof}
%Without loss of generality, we will show that $l_1$ and $l_2$ are isotopic in $H_{l_3}$. Let $l_i = {p_i \choose q_i}$.  Since $|l_1 \cap l_2| = 1$ we get that $p_1q_2-p_2q_1 = \pm 1$. Similarly $p_2q_3-p_3q_2 = \pm 1$ so that $p_1q_2-p_2q_1 = \pm (p_2q_3-p_3q_2)$. Suppose $p_1q_2-p_2q_1 = p_2q_3-p_3q_2$ (the other case is similar). Rearranging the equation we see that $\frac{p_2}{q_2} = \frac{p_1+p_3}{q_1+q_3}$. Since the curve $ {p_3 \choose q_3}$ bounds a disk in $H_{l_3}$, we may contract the $ {p_3 \choose q_3}$ factor of $l_2$ and we are left with $l_1$.
%\end{proof}

%\begin{definition}
%Let $H_\alpha \cup H_\beta$ be a Heegaard splitting of  $\#^n S^1 \times S^2$, and suppose $L$ is an integer framed link in $H_\alpha$ so that surgery along L yields $\#^m S^1 \times S^2$. Define $\textbf{M(L)}$ to be the closed 4 manifold obtained by attaching $\natural^n  S^1 \times D^3$ and $\natural^m  S^1 \times D^3$ to the trace of the surgery on $\#^n S^1 \times S^2$ by L.
%\end{definition}

\begin{figure}

  \centering
    \includegraphics[scale=.4]{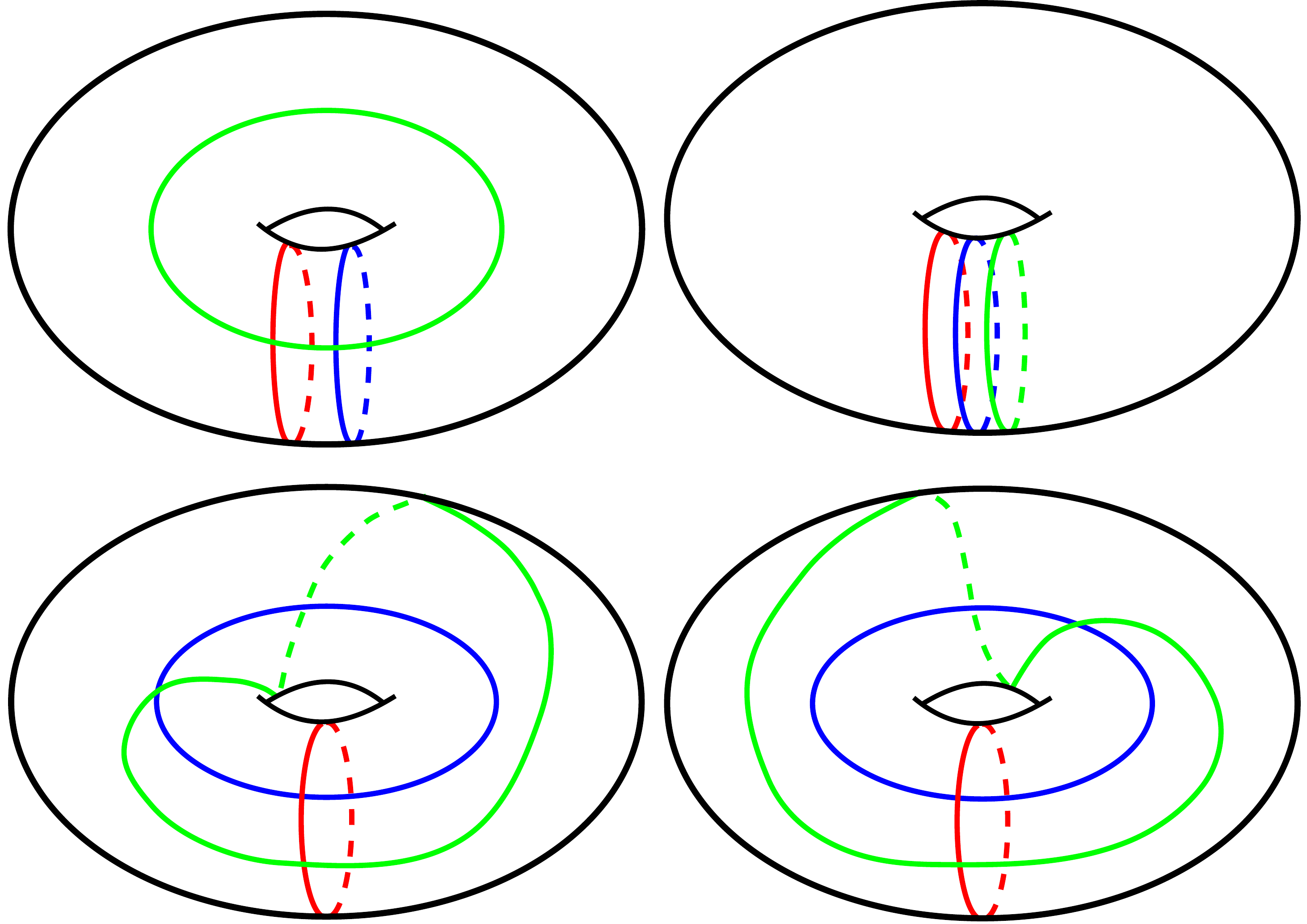}
    \caption{The genus 1 trisections. Top Left: $S^4$. Top Right: $S^1 \times S^3$. Bottom Left: $\mathbb{C}P^2$. Bottom Right: $\overline{\mathbb{C}P^2}$}
    \label{fig:GenusOneTrisections}
    
\end{figure}

\pantsDistanceOne

\begin{proof}
Suppose the distance has stabilized in $T_1$ for $M_1$ and $T_2$ for $M_2$, where $T_1$ and $T_2$ are $(g,k)$-trisections. We will construct 2 new manifolds from these trisections following the schematic found in Figure \ref{fig:DistanceOneArgument}. Consider the manifold obtained by removing $X_1$ from $T_1$ and $X_1'$ from $T_2$, and gluing the resulting two manifolds by the distance minimizing map. Since $D^P(M_1, M_2) = 1$, Lemma \ref{DistanceOneHandlebodies} implies that the 3-manifold $H_{\gamma_1} \cup H_{\gamma_2}$ is diffeomorphic to $\#^{g-1}S^1 \times S^2$. We may cut the resulting 4-manifold along $H_{\gamma_1} \cup H_{\gamma_2}$ to obtain two 4-manifolds each with boundary $\#^{g-1}S^1 \times S^2$. We may fill in each of the resulting manifolds with boundary with $\#^{g-1}S^1 \times D^3$ in order to obtain two trisected, closed 4-manifolds.  

We will focus on the manifold with trisection spine $H_\alpha \cup H_{\gamma_1} \cup H_{\gamma_2}$. This closed 4-manifold inherits the structure of a $(g;g-1,g-k,g-k)$-trisection.  By Theorem \ref{(g-1)-Trisections}, this trisection is a connect sum of genus 1 trisections. In particular, there are curves $l_1...l_g$, all bounding disks in $H_\alpha,$ $ H_{\gamma_1},$ and $H_{\gamma_2}$, which cut $\Sigma$ into $g$ once punctured tori and a sphere with $g$ holes. We may also ensure that each of these tori contain one $\alpha, \gamma_1,$ and $\gamma_2$ curve so that in all but one particular torus, the $\gamma_1$ and $\gamma_2$ curves are identical. 

Let $\alpha^i,$ $\gamma_1^i$, and $\gamma_2^i$ be the three curves on the same punctured torus with $\gamma_1^i \neq \gamma_2^i$. By virtue of the classification of genus 1 trisections, as well as the fact that $\gamma_1^i \neq \gamma_2^i$, these three curves either form a diagram for $\mathbb{C}P^2$ or for $S^4$. However, if both $T_1$ and $T_2$ are balanced trisections, the three curves must form $\mathbb{C}P^2$; for otherwise, $H_\alpha \cup H_\beta$ is $\#^{k}S^1 \times S^2$ whereas $H_\alpha \cup H_{\gamma'}$ is  $\#^{k \pm 1}S^1 \times S^2$. After a diffeomorphism, $\alpha^i,$ $\gamma_1^i$ and $\gamma_2^i$ form the trisection diagram for $\mathbb{C}P^2$ shown in Figure \ref{fig:GenusOneTrisections}. From here, it can be seen that $\gamma_1^i$ and  $\gamma_2^i$ are related by a Dehn twist about a curve bounding a disk in $H_\alpha$ so that after pushing them into $H_{\alpha}$ they become isotopic.

We may now take a diffeomorphism of the surface and perform handle slides of the $\alpha$ and $\beta$ curves so that the $\alpha$ and $\beta$ curves form the standard Heegaard diagram for $\#^{k}S^1 \times S^2$. By pushing the $g-k$ $\gamma_1$ and $\gamma_2$ curves dual to $\alpha$ curves into $H_\alpha$, and giving them the surface framing, we obtain framed links $L_1$ and $L_2$ in $H_\alpha \cup H_\beta$. On page 3104 of \cite{GK}, it is observed that the $L_i$ the are the framed attaching link for the 2-handles in a handle decomposition for $M_i$. Note that $g-k-1$ of these curves are identical, and the final curves have been shown to be isotopic in $H_\alpha$, which completes the argument. 

%The case $D(M_1, M_2) = 1$ is similar. In this case, we have the same set up except that $\gamma_1^i$ and $\gamma_2^i$ may intersect more than once. By choosing a curve which intersects $\gamma_i$ once and going through the previous construction we are able to find a pants decomposition for $H_\alpha$ with a curve, $\alpha^i$ localized in the punctured torus filled by $\gamma_1^i$ and $\gamma_2^i$. Now in order to preserve the balance of the trisection, $\gamma_1^i$ and $\gamma_2^i$ must both intersect $\alpha^i$ once. Therefore, $\gamma_2^i = T^n_{\alpha^i}(\gamma_1^i)$. 
\end{proof}

\begin{remark}
\label{rem:unbalancedDistanceOne}
Note that the construction of $D^P(T_1,T_2)$ can be generalized to encompass unbalanced trisections where one of the $k_i$ agree on each trisection. We may then mimic the proof of Theorem \ref{thm:PantsDistanceOne} to study adjacent manifolds represented by unbalanced trisections. The proof goes through unchanged except that we must also consider the possibility that $\alpha^i,$ $\gamma_1^i$ and $\gamma_2^i$ form the unbalanced trisection diagram for $S^4$ shown in Figure \ref{fig:GenusOneTrisections}. In this case, the $\gamma$ curve parallel to the $\alpha$ curve does not play a role in the induced Kirby diagram whereas the $\gamma$ curve dual to the $\alpha$ curve manifests itself as a 2-handle. We may therefore conclude that distance 1 in this more general case corresponds to either changing a handle framing by 1 (the balanced case) or adding or removing a 2-handle. 
\end{remark}

\begin{figure}

  \centering
    \includegraphics[scale=.3]{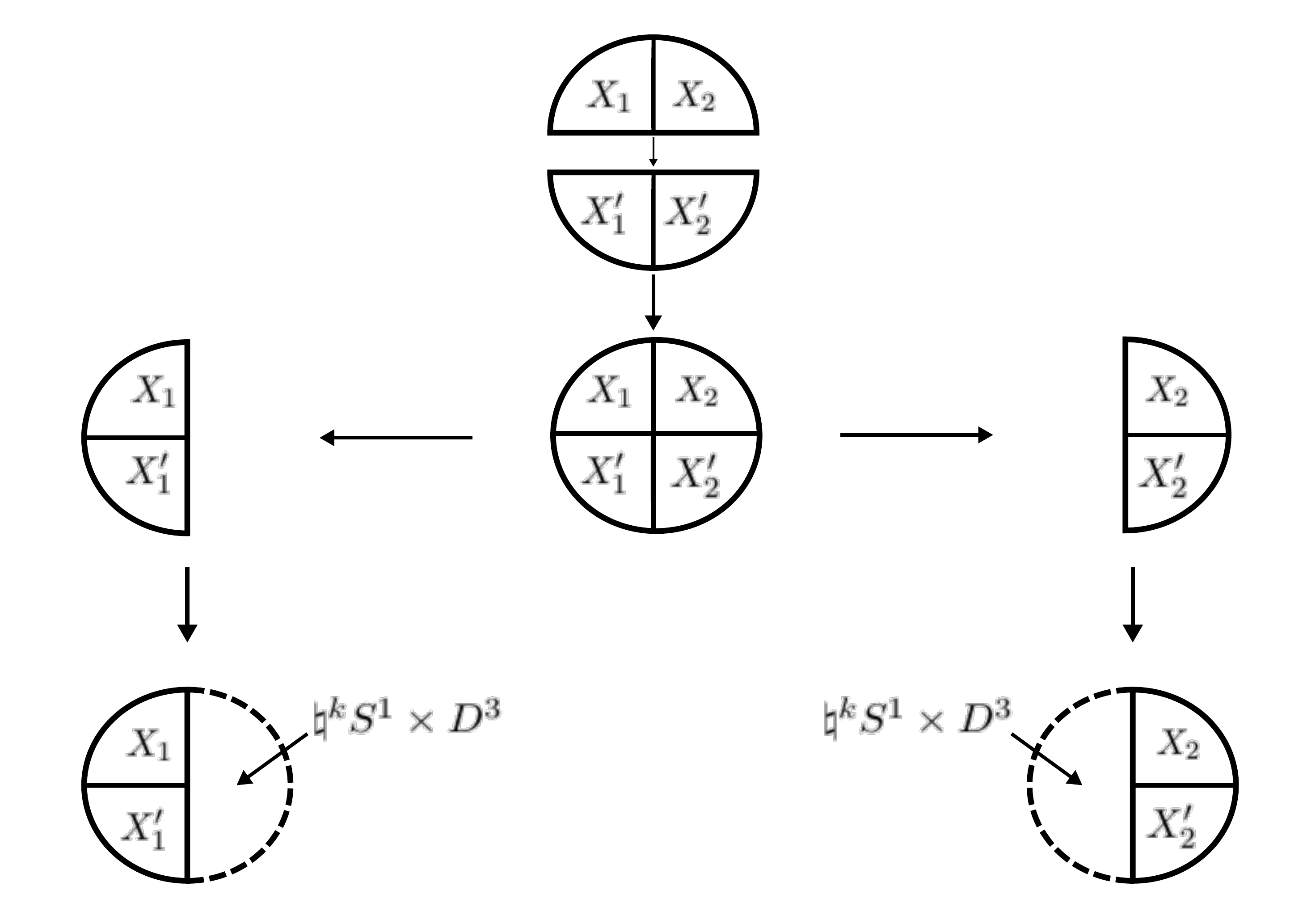}
    \caption{A schematic of the construction in Theorem \ref{thm:PantsDistanceOne}}
    \label{fig:DistanceOneArgument}
    
\end{figure}

We now seek to prove a partial converse to Theorem \ref{thm:PantsDistanceOne}. We begin by understanding how to obtain a trisection diagram from a Kirby diagram. To do this, we follow the proof for the existence of trisections given in \cite{GK}, while taking a little extra care to the particular diagram constructed. Take a Kirby diagram for $M$ with $k_1$ 1-handles and $k_2$ 2-handles. The 0 and 1-handles form $\natural^{k_1} S^1 \times D^3$ and have boundary $\#^{k_1}S^1 \times S^2$. We may take a genus $k_1$ Heegaard splitting for this boundary and draw $k_1$ parallel $\alpha$ and $\beta$ curves on the surface which bound disks in both handlebodies.  Now the framed attaching link for the 2-handles projects onto the Heegaard surface with perhaps a few crossings. Do Reidemeister 1 moves on the link on the surface to make the surface framing match the handle framing, and do a Reidemeister 2 move on each component to make sure it has at least 1 self crossing. Stabilize the Heegaard surface at each of the crossings to resolve them, resolving the self crossings as in Figures \ref{fig:ResolvingR2} and \ref{fig:ResolvingR1}. By construction, for each component of the link, $L_i$, we may choose a dual $\alpha$ curve, $\alpha_i$, that no other link component component intersects. Then we may slide any other $\alpha$ curve, $\alpha_j$, along $L_i$ over $\alpha_i$ to eliminate any intersections between $L_i$ and $\alpha_j$. 

Embedded on the Heegaard surface, we now see $g$ $\alpha$ curves, $g$ $\beta$ curves, and $k_2$ curves coming from the attaching link which are dual to $k_2$ $\alpha$ curves and disjoint from the rest of the $\alpha$ curves. We complete L to the set of $\gamma$ curves by adding in $g-k_2$ curves parallel to each $\alpha$ curve which does not intersect any component of $L$. It is clear that the pairs of curves $(\alpha, \beta)$ and $(\alpha, \gamma)$ are Heegaard diagrams for the connect sum of some number of copies of $S^1 \times S^2$. What is left to check is that the same holds for the pair $(\beta, \gamma)$.

The $\gamma$ curves define a handlebody, $H_\gamma$. Note that this handlebody is the result of pushing the $\gamma$ curves dual to $\alpha$ curves into $H_\alpha$ and performing surface framed Dehn surgery on them. But these dual curves come from the attaching link for the 2-handles of a closed 4-manifold. After attaching 2-handles along these curves pushed into $H_\alpha$, $H_\alpha$ becomes $H_\gamma$, but $H_\beta$ remains unchanged. Now $H_\gamma$ and $H_\beta$ form a Heegaard splitting for the boundary of the 3- and the 4-handles so that the pair  $(\beta, \gamma)$ is indeed a Heegaard diagram for some number of copies of $S^1 \times S^2$. We now have a possibly unbalanced trisection diagram for $M$, which we may balance by connect summing with the genus $1$ unbalanced trisection diagrams for $S^4$.

\begin{figure}
\centering
\includegraphics[scale=.5]{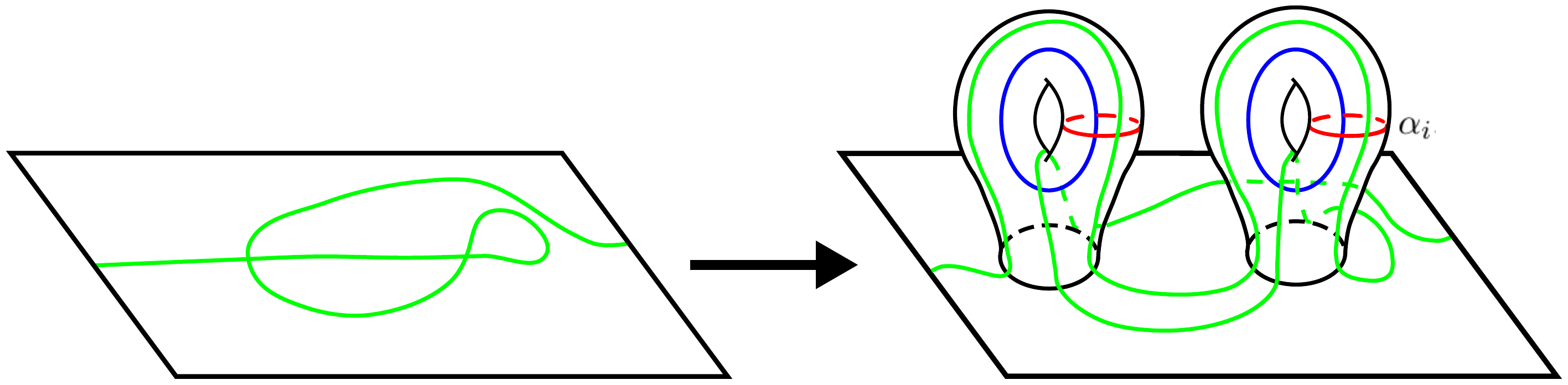}
\caption{Resolving a Reidemeister 2 move of the attaching link on the Heegaard Surface }
\label{fig:ResolvingR2}
\end{figure}

\framingPants

\begin{proof}
Let $L_i$ be the framed attaching links for $K_i$ and let $l_i$ be the component of the $L_i$ in which the framing differs. Without loss of generality, suppose that $|fr(l_1)| > |fr(l_2)|$ where $fr(l_i)$ is the framing of $l_i$. Since $K_1$ and $K_2$ have the same 0- and 1-handles, we may project both attaching links onto the Heegaard surface for the boundary of the union of the 0- and 1-handles. Introduce self intersections as previously described to obtain dual $\alpha$ curves, and to make the surface framing match the handle framing. This results in almost the same immersed curves on the Heegaard surface, except that $l_1$ has one more kink in it than $l_2$. Stabilize the Heegaard surface at all crossings, and in the extra kink, send $l_2$ over the stabilizing genus without twisting so as not to change the framing. See Figure \ref{fig:ResolvingR1} for an illustration of this process. 

We must now choose dual $\alpha$ curves for each component of the link in order to eliminate intersections. Let $\alpha_i$ be the $\alpha$ curve in the stabilization where $l_1$ and $l_2$ differ. Choose $\alpha_i$ to be the $\alpha$ curve dual to both $l_1$ and $l_2$ and choose arbitrary dual $\alpha$ curves for the rest of the components of the $L_i$. We now claim that eliminating the extra $\alpha$ intersections with $L_i$ by sliding curves off over the dual $\alpha$ curves along arcs parallel to the link components results in identical $\alpha$ curves. Sliding any curve along a link component which is not $l_1$ or $l_2$ obviously results in the same curve since we have constructed these curves to be identical. Moreover, sliding an $\alpha$ curve over $\alpha_i$ along $l_1$ is isotopic to sliding the $\alpha$ curve over $\alpha_i$ along $l_2$ as can be seen in Figure \ref{fig:HandleSlideAnnulusEquivalence}.

We now have possibly unbalanced trisection diagrams for $M_1$ and $M_2$ with identical $\alpha$ and $\beta$ curves. We seek to show that the $k_i$ for both of these manifolds are equal so that we may connect sum with the same unbalanced trisections of $S^4$ in order to balance them. It is straightforward to show that a $(g;k_1,k_2,k_3)$-trisection has Euler characteristic $2+g-k_1-k_2-k_3$. First note that both of these trisections have the same genus. Furthermore, $k_1$ is the number of copies of $S^1 \times S^2$ formed by the $\alpha$ and $\beta$ curves, which is clearly the same for both trisections. In addition, $k_3$ comes from the $\alpha$ and $\gamma$ curves, which we have constructed to be the same in both cases. Finally, the assumption that $M_1$ and $M_2$ have the same Euler characteristic ensures that these manifolds have equal $k_3$ so that we may balance these trisections in an identical manner. 

Finally, we complete both sets of $\gamma$ curves to pants decompositions of the handlebodies to finish the argument. To this end, note that $l_1$ and $l_2$ intersect transversely in one point so that the boundary of a regular neighborhood of the curves bounds a disk in both handlebodies. This cuts off a punctured torus containing $l_1$ and $l_2$. Outside of this punctured torus the $\gamma$ handlebodies are identical and so we may complete them to an arbitrary pants decomposition. The resulting pants decompositions are easily seen to be one apart in the pants complex.
\end{proof}

\begin{figure}
\centering
\includegraphics[scale=.5]{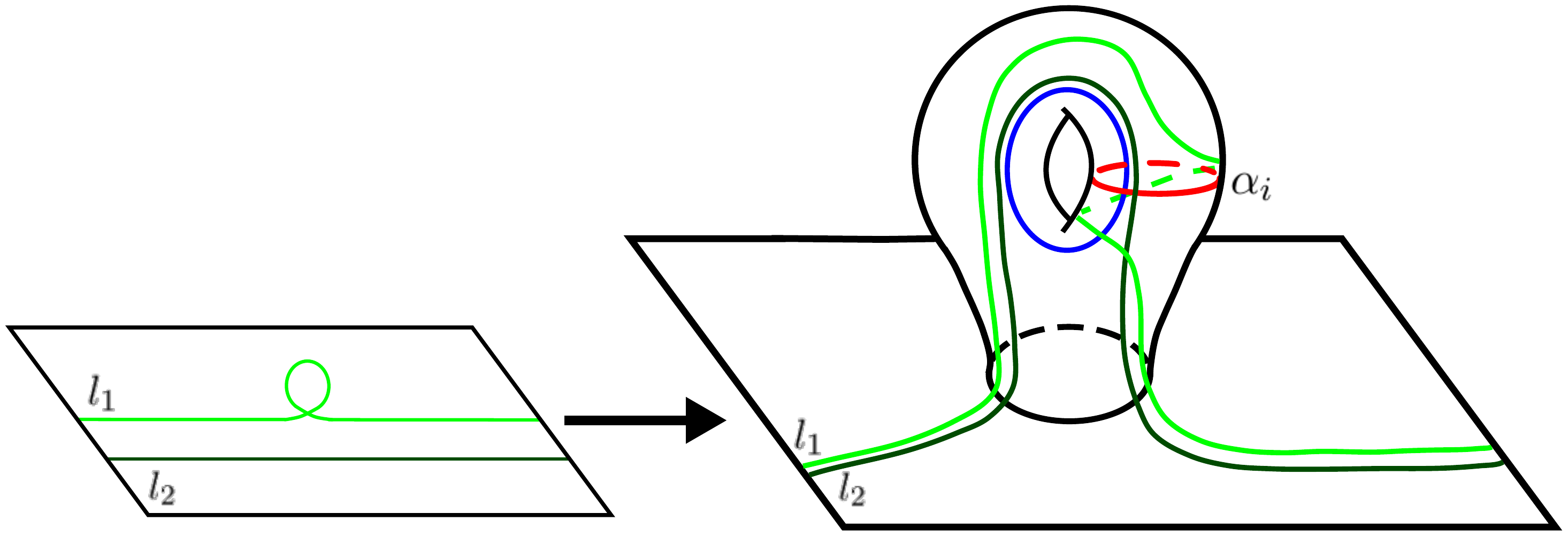}
\caption{Resolving a Reidemeister 1 move to change the surface framing of $l_1$ by 1. Parallel curves such as $l_2$ can be sent over the stabilizing surface without twisting about $\alpha_i$ to preserve the surface framing}
\label{fig:ResolvingR1}
\end{figure}

\begin{figure}
\centering
\includegraphics[scale=.3]{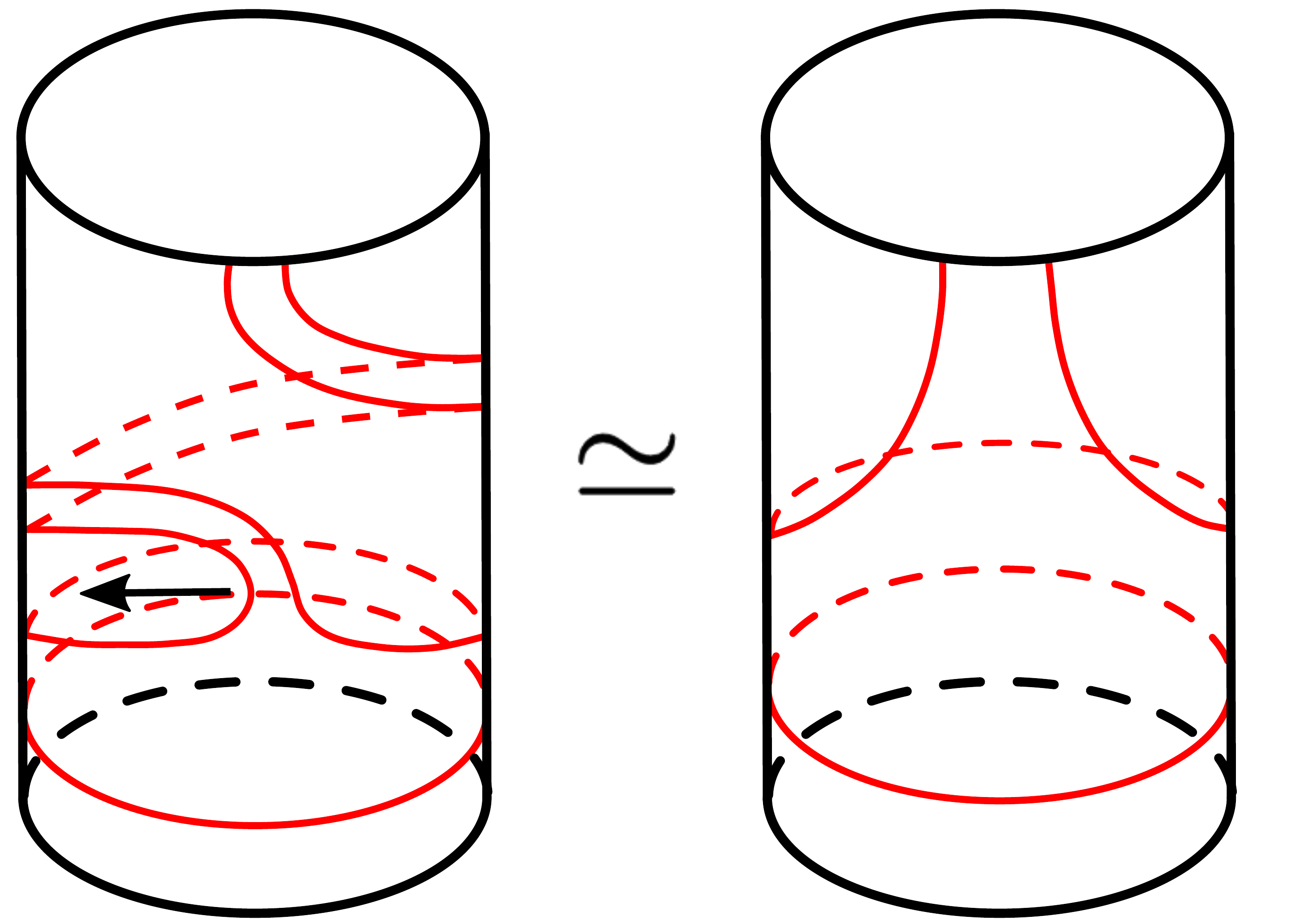}
\caption{Dehn twisting the sliding arc about the target curve does not change the isotopy type of slid curve.}
\label{fig:HandleSlideAnnulusEquivalence}
\end{figure}

%We summarize the previous two Propositions in the following theorem.
%
%\begin{theorem}
%Let $M_1$ and $M_2$ be non-diffeomorphic 4-manifolds with the same Euler characteristic. Then $D^P(M_1,M_2) = 1$ if and only if $M_1$ and $M_2$ have Kirby diagrams which only differ in the framing of some 2-handle, where the framing differs by 1.
%\end{theorem}

We may also alter the framings of 2-handles by Dehn twisting about a chosen dual $\alpha$ curve which intersects no other component of the attaching link (recall that such curves may always be created by the introduction of self crossings in the link component). The result of repeatedly Dehn twisting a link component about the given $\alpha$ curve may intersect our original link component many times, however both curves lie in the punctured torus filled by the $\alpha$ curve and the original link component. In addition, adding more Dehn twists to the sliding arc in Figure \ref{fig:HandleSlideAnnulusEquivalence} does not change the isotopy type of the resulting curve so that we may again eliminate intersections via isotopic handle slides of the $\alpha$ curves. These are all the essential ingredients to the following theorem, whose details we leave to the reader.

\begin{theorem}
\label{prop:FramingDual}
Let $M_1$ and $M_2$ be non-diffeomorphic 4-manifolds which have Kirby diagrams, $K_1$ and $K_2$ respectively. If $K_1$ and $K_2$ only differ in the framing of some 2-handle, then $D(M_1,M_2) = 1$.
\end{theorem}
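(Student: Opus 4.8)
The plan is to run the argument for Theorem~\ref{thm:FramingPants} essentially verbatim, the only change being that an edge of the dual curve complex lets a pants curve be replaced by \emph{any} curve yielding a new pants decomposition, not merely one meeting the old curve minimally. Since $K_1$ and $K_2$ have the same handles in every dimension we have $\chi(M_1)=\chi(M_2)$, so $D(M_1,M_2)$ is well defined; I would therefore produce a single pair of trisections of $M_1$ and $M_2$ realizing distance $1$ and then lift this path to all stabilizations.

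First I would build trisection diagrams from $K_1$ and $K_2$ via the Gay--Kirby construction recalled above, making the choices so that the two diagrams share identical $\alpha$ and $\beta$ curves and share all $\gamma$ curves except the one coming from the $2$-handle whose framing was changed. Write $l$ for that $2$-handle's attaching circle and suppose its framing in $K_1$ exceeds its framing in $K_2$ by the integer $m$. Projecting both attaching links to the common genus-$k_1$ Heegaard surface of $\#^{k_1}S^1\times S^2$, introducing self-crossings, and resolving crossings by stabilizing as in Figures~\ref{fig:ResolvingR2} and~\ref{fig:ResolvingR1}, one can arrange the strand of $l$ to carry $m$ extra kinks in $K_1$ relative to $K_2$ with all other data identical. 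This produces an $\alpha$-curve $\alpha_i$ dual to the strand of $l$ (in either diagram) and disjoint from every other link component. Pushing the surface strands into $H_\alpha$ with the surface framing, the two resulting curves $l_1$ (from $K_1$) and $l_2$ (from $K_2$) differ by $m$ Dehn twists about $\alpha_i$; since the twist is supported near $\alpha_i$, both $l_1$ and $l_2$ lie on the once-punctured torus $P:=N(\alpha_i\cup l_1)$ and each meets $\alpha_i$ exactly once.

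Next I would clear the remaining $\alpha$-intersections with the link by sliding $\alpha$-curves over the chosen dual curves along arcs parallel to the link components, exactly as in the paragraph preceding the theorem: slides along components other than $l$ give identical curves because those components coincide in the two diagrams, and sliding over $\alpha_i$ along $l_1$ versus along $l_2$ gives isotopic curves because the two sliding arcs differ by Dehn twisting about the target curve, which does not change the isotopy type of the slid curve (Figure~\ref{fig:HandleSlideAnnulusEquivalence}, applied with as many twists as needed). Hence the resulting $\alpha$ curves agree for the two diagrams, so all of the $\alpha$, $\beta$, and all but one of the $\gamma$ curves agree; after balancing by connect-summing both diagrams with the same unbalanced genus-$1$ trisection diagrams of $S^4$ (legitimate since the three $k_i$ match, as in the proof of Theorem~\ref{thm:FramingPants}) we obtain genuine trisection diagrams $T_1,T_2$ for $M_1,M_2$ whose $\gamma$ curves differ only in $l_1$ versus $l_2$.

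To finish, complete the $\gamma$ curves to pants decompositions $v_1,v_2$ agreeing outside $P$: the separating curve $c=\partial P$ bounds a disk in both $\gamma$-handlebodies, since in $H_{\gamma_1}$ the curve $l_1\subset P$ bounds a disk with $|\alpha_i\cap l_1|=1$ and in $H_{\gamma_2}$ the curve $l_2\subset P$ bounds a disk with $|\alpha_i\cap l_2|=1$, so $c$ can be added to both decompositions and the complement of $P$ completed identically, leaving inside $P$ exactly one interior curve, namely $l_1$ for $v_1$ and $l_2$ for $v_2$. Thus $v_1$ and $v_2$ share all but one curve and differ in curves $l_1\neq l_2$ each disjoint from the shared ones, so they span an edge of $C^*(\Sigma)$ and $D(T_1,T_2)=1$ (not $0$, since $M_1\not\cong M_2$). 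Because all but one $\gamma$ curve is fixed along this length-one path, the ``never moved'' clause of Lemma~\ref{lem:PathInPuncturedSurface} lets the path persist under every stabilization, giving $\Lim{g\to\infty}D(T_1^g,T_2^g)\le 1$; as this limit is positive precisely because $M_1$ and $M_2$ are not diffeomorphic, we conclude $D(M_1,M_2)=1$. The one place where care is genuinely needed, and the likely main obstacle, is the claim that the handle slides clearing the $\alpha$-intersections yield literally identical $\alpha$ curves for the two diagrams even when $l_1$ and $l_2$ intersect many times because $|m|$ is large; the containment of both $l_1$ and $l_2$ in the single punctured torus $P$, together with the invariance of a slid curve under twisting the sliding arc about its target, is exactly what makes this go through.
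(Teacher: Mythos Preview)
Your argument is essentially the paper's own sketch: produce one dual $\alpha$-curve $\alpha_i$ from a single resolved self-crossing of $l$, realize the framing change as repeated Dehn twists of $l$ about $\alpha_i$, observe that both $l_1$ and $l_2$ live in the once-punctured torus $N(\alpha_i\cup l_1)$, and use that extra twists of the sliding arc about its target leave the slid curve unchanged so that the $\alpha$-curves end up identical. Two minor inaccuracies are worth flagging. First, the phrase ``$m$ extra kinks'' is misleading if read literally: introducing $m$ Reidemeister~1 moves would force $m$ separate stabilizations and hence $m$ new $\alpha$-curves, making $l_1$ and $l_2$ differ in $m$ places rather than one; what you actually use (and what the paper intends) is a \emph{single} stabilization together with $m$ Dehn twists about the one $\alpha_i$ it creates, and the rest of your write-up is consistent with this. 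Second, the assertion that $\chi(M_1)=\chi(M_2)$ because ``$K_1$ and $K_2$ have the same handles in every dimension'' is false in general---the number of $3$-handles is dictated by the boundary after the $2$-handles and can jump with the framing (e.g.\ the $0$-framed versus $1$-framed unknot yield $S^4$ and $\mathbb{CP}^2$, with $\chi=2$ and $3$). This does not damage the proof, since the very statement $D(M_1,M_2)=1$ presupposes that $D$ is defined and hence that $\chi(M_1)=\chi(M_2)$; but the equality should be taken as part of the hypotheses (as it is in Theorem~\ref{thm:FramingPants}) rather than derived.
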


\section{Complexes of Trisections}

We next seek to define a collection of graphs associated to trisections. Here, it is useful to consider the more general case of unbalanced trisections. Fix a surface, $\Sigma$, and two handlebodies, $H_\alpha$ and $H_\beta$, with boundary $\Sigma$ so that $H_\alpha \cup_\Sigma H_\beta \cong \#^{k_1}S^1 \times S^2$.  We may identify the first two handlebodies in a $(g;k_1,k_2,k_3)$-trisection with  $H_\alpha \cup_\Sigma H_\beta$. The third handlebody then gives rise to some handlebody subset of $P(\Sigma)$. We therefore have a subcomplex of the pants complex associated to any (possibly unbalanced) trisection with parameters $(g;k_1,-,-)$. This motivates the following definition:

\begin{definition}
Fix a genus g surface surface $\Sigma$ and two handlebodies $H_\alpha$ and $H_\beta$ so that $H_\alpha \cup_\Sigma H_\beta \cong \#^{k_1}S^1 \times S^2$. Define the $(g;k_1,-,-)$ complex of trisections, $\mathbf{P(g,k_1)}$, to be the full subgraph of the pants complex whose vertices are  $\{\gamma \in P(\Sigma) |$ $\gamma$ defines $H_\gamma,$ $H_{\alpha} \cup_\Sigma  H_{\gamma} \cong \#^{k_2} S^1 \times S^2,$ and $ H_{\beta} \cup_\Sigma  H_{\gamma} \cong \#^{k_3} S^1 \times S^2 \}$.
\end{definition}

\begin{definition}
$\gamma \in P(g, k_1)$ is a \textbf{representative} for a trisection $T$ if $\gamma$ defines $H_\gamma$ and $H_\alpha \cup H_\beta \cup H_\gamma$ is a spine for $T$. We say $T_1$ and $T_2$ are \textbf{adjacent} in $P(g,k_1)$ if they have representatives which are adjacent.
\end{definition}

Note that a trisection has many representatives in $P(g,k_1)$. Not only are there infinitely many vertices in the pants complex defining the same handlebody, but multiple different handlebodies may represent the same trisection. For example, if $k_1>0$ there is some nonseparating curve which bounds disks in both $H_\alpha$ and $H_\beta$. A Dehn twist about this curve will usually change $H_\gamma$, but will give rise to a diffeomorphic trisection. More generally, we could take any element of the mapping class group $Mod(H_\alpha \cup H_\beta, \Sigma )$ which does not extend across $H_\gamma$ to produce similar results.

\begin{lemma}
\label{lem:2stabToCP2}
Let $T$ be a stabilized trisection of $M^4$. Then there exists a trisection $T'$ for $M \# \mathbb{C}P^2$ so that $T$ and $T'$ are adjacent in $P(g,k_1)$.
\end{lemma}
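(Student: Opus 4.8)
The plan is to exploit the explicit structure of a stabilized trisection: since $T$ is stabilized, it contains a genus-3 summand coming from the connected sum with the standard $(3,1)$-trisection of $S^4$, and this summand has a concrete trisection diagram on a genus-3 subsurface $\Sigma_0 \subset \Sigma$. I would first recall the genus-1 trisection diagrams of Figure \ref{fig:GenusOneTrisections}, in particular the diagrams for $S^4$ and for $\mathbb{C}P^2$, and observe that the $S^4$ diagram and the $\mathbb{C}P^2$ diagram share their $\alpha$ and $\gamma_1$ (or $\alpha$ and $\beta$) curves and differ only in the third curve, where the two differing curves sit in a punctured torus meeting in a single point — i.e. they are exactly one $S$-move apart in the pants complex of the punctured torus. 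The point is that passing from $S^4$ to $\mathbb{C}P^2$ on one genus-1 factor is realized by a single $S$-move on a curve supported in that factor.

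Next I would argue that we may arrange the stabilization so that one of its three genus-1 factors carries the $S^4$-diagram in the normalized form of Figure \ref{fig:GenusOneTrisections}, with the $\alpha$ and $\beta$ curves of that factor being the two curves used in the stabilization (the ones that bound disks in all three handlebodies). Fixing $H_\alpha$ and $H_\beta$ as required by the definition of $P(g,k_1)$, the handlebody $H_\gamma$ for $T$ is represented by a pants decomposition $\gamma$ which, on the relevant genus-1 summand, is the third curve of the $S^4$-diagram. Now define $\gamma'$ to be the pants decomposition agreeing with $\gamma$ outside that summand and replacing the one curve by the $\mathbb{C}P^2$-curve; since the two curves lie in a once-punctured torus and intersect once, $\gamma$ and $\gamma'$ are joined by an $S$-move, hence adjacent in $P(\Sigma)$. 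One then checks that $\gamma'$ still defines a handlebody $H_{\gamma'}$ with $H_\alpha \cup H_{\gamma'}$ and $H_\beta \cup H_{\gamma'}$ each a connected sum of $S^1\times S^2$'s (the genus-1 change replaces an $S^1\times S^2$ summand on one side by an $S^3$ summand, as in the balanced-case computation in the proof of Theorem \ref{thm:PantsDistanceOne}), so $\gamma' \in P(g,k_1)$ and $H_\alpha \cup H_\beta \cup H_{\gamma'}$ is the spine of a genuine trisection $T'$. Comparing trisection diagrams, $T'$ differs from $T$ by connect-summing the $S^4$ genus-1 factor with the $\mathbb{C}P^2$ genus-1 factor, i.e. $T' $ is a trisection of $M \# \mathbb{C}P^2$; equivalently, one reads off the induced Kirby diagrams as on page 3104 of \cite{GK} and sees an extra $+1$-framed unknotted $2$-handle appears.

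The main obstacle I expect is bookkeeping rather than conceptual: one must verify carefully that the single curve replacement is supported away from everything else — that the $S$-move takes place inside a punctured torus disjoint from all the other $\gamma$ curves and from the $\alpha,\beta$ curves outside the stabilizing region — and that after the replacement the pair $(\beta,\gamma')$ is still a Heegaard diagram for some $\#^{k}S^1\times S^2$ (so that $T'$ is actually a trisection, not just a handlebody triple). Both follow from the product/connected-sum structure of stabilized trisection diagrams together with the genus-1 classification, but they deserve explicit mention. Once these are in hand, adjacency of $T$ and $T'$ in $P(g,k_1)$ is immediate from the definition, since $\gamma$ represents $T$, $\gamma'$ represents $T'$, and $\gamma,\gamma'$ are adjacent vertices of the pants complex.
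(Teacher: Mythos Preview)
Your approach is correct and is precisely the argument the paper has in mind; the paper does not actually give a proof, declaring the lemma ``straightforward'' and remarking only that a 2- or 3-stabilization (a $(1;0,1,0)$ or $(1;0,0,1)$ summand) suffices, which is exactly the genus-1 factor you use for the single $S$-move to the $\mathbb{C}P^2$ diagram. One small slip to fix: your parenthetical ``(the ones that bound disks in all three handlebodies)'' is not accurate for the $\alpha,\beta$ curves on a genus-1 $S^4$ summand --- it is the separating curve cutting off the summand that bounds in all three handlebodies, not the $\alpha,\beta$ curves themselves --- but this does not affect the argument.
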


The proof of the previous lemma is straightforward. We may in fact weaken the hypothesis that $T$ is stabilized to the condition that $T$ is 2 or 3-stabilized (i.e. has a connect summand which is a $(1;0,1,0)$ or $(1;0,0,1)$ trisection of $S^4$) but the lemma as stated will be sufficient for our needs. This lemma is useful to us because 4-manifolds can change drastically under connect sums with $\mathbb{C}P^2$ and $\overline{\mathbb{C}P^2}$. The following corollary of Wall's theorem in \cite{CW} makes this precise.

\begin{proposition} (Corollary 9.1.14 of \cite{GS})
\label{prop:WallStability}
Let $M_1$ and $M_2$ be simply connected 4-manifolds. Then there exist natural numbers $l_1,l_2,m_1,m_2$ so that $M_1 \#^{l_1} \mathbb{C}P^2 \#^{m_1} \overline{\mathbb{C}P^2}$ is diffeomorphic to $M_2 \#^{l_2} \mathbb{C}P^2 \#^{m_2} \overline{\mathbb{C}P^2}$
\end{proposition}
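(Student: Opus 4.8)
The plan is to deduce this from Wall's stable‑diffeomorphism theorem for simply connected smooth $4$-manifolds, after first arranging that the two sides have isomorphic intersection forms by blowing up enough times. Recall that $\mathbb{C}P^2$ carries the form $\langle 1\rangle$, that $\overline{\mathbb{C}P^2}$ carries $\langle -1\rangle$, and that intersection forms add under connected sum; by van Kampen each $M_i \#^{l_i}\mathbb{C}P^2 \#^{m_i}\overline{\mathbb{C}P^2}$ is again simply connected, with intersection form $Q_i \oplus l_i\langle 1\rangle\oplus m_i\langle -1\rangle$, where $Q_i$ denotes the form of $M_i$, of rank $r_i$ and signature $s_i$. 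The point is that as soon as $l_i\geq 1$ and $m_i\geq 1$ this form is \emph{odd} (the $\langle 1\rangle$ summand contains a vector of odd square) and \emph{indefinite}, so by the classification of indefinite unimodular symmetric bilinear forms it is isomorphic to the standard diagonal form of its rank and signature.

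First I would choose the exponents to make the two stabilized forms agree. For any unimodular form one has $r_i\equiv s_i\pmod 2$ (diagonalize over $\mathbb{R}$), so fixing a large even integer $R>\max\{r_1+|s_1|,\,r_2+|s_2|\}$ and setting $l_i=(R-r_i-s_i)/2$ and $m_i=(R-r_i+s_i)/2$ produces integers with $l_i,m_i\geq 1$. With these choices both $X_i:=M_i\#^{l_i}\mathbb{C}P^2\#^{m_i}\overline{\mathbb{C}P^2}$ have intersection form of rank $R$ and signature $0$, odd and indefinite, hence isomorphic intersection forms.

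Next I would invoke Wall's theorem (\cite{CW}; see \cite[Ch.~9]{GS}): simply connected smooth $4$-manifolds with isomorphic intersection forms become diffeomorphic after connected sum with sufficiently many copies of $S^2\times S^2$. Applied to $X_1$ and $X_2$ this gives an $n$ (equal numbers on the two sides being forced by comparing second Betti numbers) with $X_1\#^n(S^2\times S^2)\cong X_2\#^n(S^2\times S^2)$. Finally I would absorb the $S^2\times S^2$ factors using the standard diffeomorphism $(S^2\times S^2)\#\mathbb{C}P^2\cong \mathbb{C}P^2\#\mathbb{C}P^2\#\overline{\mathbb{C}P^2}$: since $l_i\geq 1$ each $X_i$ retains a $\mathbb{C}P^2$ summand throughout, so iterating this identity rewrites $X_i\#^n(S^2\times S^2)$ as $M_i\#^{l_i+n}\mathbb{C}P^2\#^{m_i+n}\overline{\mathbb{C}P^2}$. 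Combined with the diffeomorphism from Wall's theorem, this yields the statement with $(l_i',m_i')=(l_i+n,\,m_i+n)$.

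The genuine content is entirely carried by Wall's theorem, whose own proof is a substantial handle‑trading argument; taking that as a black box (as the reference to \cite{GS} indicates), the only things needing care are that the stabilized forms really are odd and indefinite so the elementary classification applies, and that at least one $\mathbb{C}P^2$ summand survives so that the $S^2\times S^2$ factors can be traded for blow‑ups.
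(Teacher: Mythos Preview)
The paper does not supply its own proof of this proposition: it is quoted verbatim as Corollary~9.1.14 of \cite{GS} and used as a black box. Your argument is correct and is precisely the standard deduction one finds in Gompf--Stipsicz: pass to odd indefinite forms by blowing up, invoke the classification of such forms to match the intersection lattices, apply Wall's $S^2\times S^2$-stabilization theorem, and finally trade each $S^2\times S^2$ for $\mathbb{C}P^2\#\overline{\mathbb{C}P^2}$ using the identity $(S^2\times S^2)\#\mathbb{C}P^2\cong 2\mathbb{C}P^2\#\overline{\mathbb{C}P^2}$. The bookkeeping with $R$, $l_i$, $m_i$ is fine (the parity check $r_i\equiv s_i\pmod 2$ is exactly what guarantees integrality, and the strict inequality on $R$ forces $l_i,m_i\geq 1$ so the form is genuinely odd and a $\mathbb{C}P^2$ summand persists for the trading step). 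There is nothing to compare against here beyond noting that your write-up faithfully reconstructs the cited textbook argument.
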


We are now well equipped to prove the main proposition of this section.

\begin{proposition}
Let $M_1$ and $M_2$ be simply connected 4-manifolds. Then there exists a natural number, $k$, and $(g,k,-,-)$-trisections, $T_1$ and $T_2$, for $M_1$ and $M_2$ respectively, so that $T_1$ and $T_2$ are in the same connected component of $P(g,k)$. 
\end{proposition}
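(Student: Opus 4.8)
The plan is to combine three ingredients: Wall's stabilization theorem (Proposition~\ref{prop:WallStability}) to replace $M_1$ and $M_2$ by a single common connect sum, Lemma~\ref{lem:2stabToCP2} to realize the individual connect-sum operations as edges inside one fixed complex $P(g,k)$, and the Reidemeister-Singer theorem for trisections (Theorem~\ref{thm:TrisectionsStabilize}) to splice the two resulting paths together at a common trisection after a harmless stabilization that the machinery of Section~2 lets us perform without changing path lengths.

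More precisely, I would first apply Proposition~\ref{prop:WallStability} to produce natural numbers with $N:=M_1\#^{l_1}\mathbb{C}P^2\#^{m_1}\overline{\mathbb{C}P^2}\cong M_2\#^{l_2}\mathbb{C}P^2\#^{m_2}\overline{\mathbb{C}P^2}$, and fix such an $N$ together with the diffeomorphism. I would then choose $g$ and $k$ large and, for each $i$, stabilize an arbitrary trisection of $M_i$ to a trisection $R_i$ of $M_i$ that lies in $P(g,k)$ and is a connect sum of some trisection $C_i$ of $M_i$ with many genus $1$ $S^4$ trisections; using the standard stabilization together with the three unbalanced genus $1$ stabilizations of $S^4$, which shift $(g;k_1,k_2,k_3)$ by $(1;1,0,0)$, $(1;0,1,0)$ and $(1;0,0,1)$, one can hit any sufficiently large pair $(g,k)$ — this is where the fact that $P(g,k)$ imposes no condition on $\chi$ is used, so that $M_1$ and $M_2$ can be compared even when $\chi(M_1)\neq\chi(M_2)$ — while keeping at least $\max\{l_i,m_i\}$ genus $1$ $S^4$ summands of each of the two types $(1;0,1,0)$ and $(1;0,0,1)$ to which Lemma~\ref{lem:2stabToCP2} (and its mirror, coming from the $\overline{\mathbb{C}P^2}$ diagram of Figure~\ref{fig:GenusOneTrisections}) applies. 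Each application of that lemma is a single pants move supported in one summand torus and leaves the pair $(g,k)$ fixed, so applying it $l_i+m_i$ times gives a path in $P(g,k)$ from $R_i$ to a trisection $S_i$ of $N$, and for $g$ large this path leaves all but $l_i+m_i$ of the genus $1$ summand curves untouched.

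To finish, $S_1$ and $S_2$ are trisections of the same manifold $N$, so by Theorem~\ref{thm:TrisectionsStabilize} there is an $n$ with $S_1^n$ and $S_2^n$ isotopic as trisections. Since each of the two paths above contains a loop that is never moved, the cutting-and-capping argument of Lemma~\ref{lem:PathInPuncturedSurface} — exactly as it is used in Section~2 to lift paths under stabilization — lifts them without change of length to paths in $P(g+3n,\,k+n)$ from $R_i^n$ to $S_i^n$, and $R_i^n$ is still a trisection of $M_i$. As $S_1^n$ and $S_2^n$ are isotopic they determine the same vertex of $P(g+3n,\,k+n)$, so concatenating the first lifted path with the reverse of the second exhibits $T_1:=R_1^n$ and $T_2:=R_2^n$ in the same connected component of $P(G,K)$, with $G=g+3n$ and $K=k+n$, as required.

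I expect the main obstacle to be the bookkeeping that confines every move to a single complex $P(g,k)$: one has to verify that Lemma~\ref{lem:2stabToCP2} genuinely fixes the pair $(g,k)$ (so that all the $\mathbb{C}P^2$ and $\overline{\mathbb{C}P^2}$ summing happens in one complex), that a single $(g,k)$ can be chosen to serve both $M_1$ and $M_2$ while still leaving enough $S^4$ summands as "fuel" for all the conversions, and that the constructed paths retain an unmoved loop so that the stabilization-lifting lemma is available before Theorem~\ref{thm:TrisectionsStabilize} is invoked. Simple connectivity of $M_1$ and $M_2$ enters only through Proposition~\ref{prop:WallStability}.
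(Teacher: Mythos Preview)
Your proposal is correct and follows essentially the same route as the paper: invoke Proposition~\ref{prop:WallStability} to pass to a common manifold $N$, use Lemma~\ref{lem:2stabToCP2} to realize each $\mathbb{C}P^2$ or $\overline{\mathbb{C}P^2}$ summand as a single edge in $P(g,k)$, and use Theorem~\ref{thm:TrisectionsStabilize} to merge the two resulting paths at isotopic trisections of $N$. The only cosmetic difference is bookkeeping order: the paper first computes how many balanced stabilizations $b$ are needed to make the two trisections of $N$ isotopic and performs all stabilizations up front, whereas you construct the paths first and then lift them after stabilizing; these are equivalent.

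One small remark: your appeal to Lemma~\ref{lem:PathInPuncturedSurface} for the lifting step is heavier machinery than needed. Your paths from $R_i$ to $S_i$ are, by construction, supported in $l_i+m_i$ specific torus summands and leave the remaining summands untouched, so balanced stabilization (which takes place in a disjoint summand) lifts them to $P(g+3n,k+n)$ immediately, with every intermediate vertex still a genuine trisection. Lemma~\ref{lem:PathInPuncturedSurface} produces a path in $P(\Sigma\setminus D)$ but does not by itself guarantee that the intermediate vertices remain in $P(g,k)$; the direct argument does.
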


\begin{proof}
Take arbitrary trisections $T_1$ of $M_1$, and $T_2$ of $M_2$. Now 1- and 2-stabilize them so that they have the same genus, $g$, and the same $k_1$. We will first calculate the number of stabilizations needed for the construction. Let $l_1,l_2,m_1,m_2$ be as in Proposition \ref{prop:WallStability}. Let $a=max\{l_1 + m_1, l_2+m_2\}$. After 2-stabilizing $T_1$ and $T_2$ $a$ times, we may change each 2-stabilization into a summand of $\mathbb{C}P^2$ or $\overline{\mathbb{C}P^2}$ to obtain two (possibly different) trisections for the same 4-manifold. By Theorem \ref{thm:TrisectionsStabilize}, we may perform some number of balanced stabilizations on the resulting trisections until they are isotopic. Let $b$ be the number of stabilizations needed to make the resulting trisections isotopic.

We claim that $T_1^{g+a+3b}$ and $T_2^{g+a+3b}$ can be connected in $P(g+a+3b,k_1+b)$. To see this, observe that by Lemma \ref{lem:2stabToCP2}, each 2-stabilization can be changed into an extra factor of  $\mathbb{C}P^2$ or $\overline{\mathbb{C}P^2}$ adjacent to $T_1^{g+a+3b}$ or $T_2^{g+a+3b}$. Changing each 2-stabilization in $T_1^{g+a+3b}$ to the appropriate $\mathbb{C}P^2$ or $\overline{\mathbb{C}P^2}$ summand successively leads to a path to a trisection of $M_1 \#^{l_1} \mathbb{C}P^2 \#^{m_1} \overline{\mathbb{C}P^2}$ which we know to be diffeomorphic to $M_2 \#^{l_2} \mathbb{C}P^2 \#^{m_2} \overline{\mathbb{C}P^2}$. Moreover, the constructed trisections have been stabilized enough to become isotopic.
\end{proof}

\begin{figure}
\centering
\includegraphics[scale=.4]{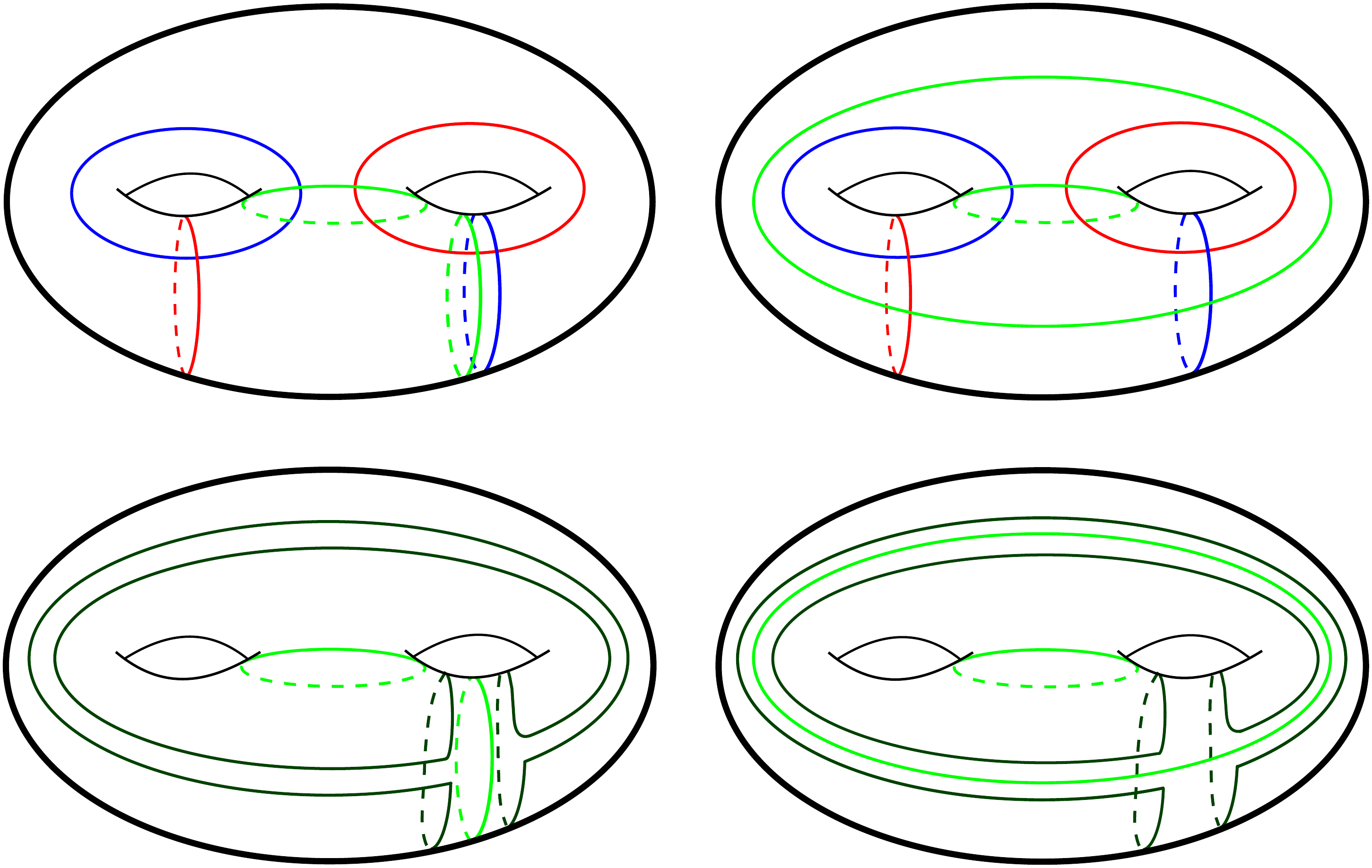}
\caption{$S^4$ (shown on the left) is adjacent to $S^2\times S^2$ (shown on the right) in $P(2,0)$. }
\label{fig:D(S2xS2,S4)=1}
\end{figure}

It is especially interesting to know which manifolds are adjacent to $S^4$, for if $N$ is adjacent to $S^4$, then for any M, we may stabilize a trisection to get an adjacent trisection for $M\#N$. It is straightforward to see that $S^4$ is adjacent to $\mathbb{C}P^2$, $\overline{\mathbb{C}P^2}$ and $S^1 \times S^3$. Furthermore, Figure \ref{fig:D(S2xS2,S4)=1} shows that $S^4$ is also adjacent to $S^2 \times S^2$. It is tempting to believe that this is a complete list of manifolds. In light of Remark \ref{rem:unbalancedDistanceOne}, manifolds adjacent to $S^4$ correspond to starting with some (perhaps very complicated) Kirby diagram for $S^4$ and then changing the framing of some 2-handle, or adding/removing a 2 handle. We conclude with a question.

\begin{question} Which 4-manifolds are adjacent to $S^4$?

\end{question}

\bibliography{mybib}{}

\begin{thebibliography}{10}

\bibitem{GK}
David Gay and Robion Kirby.
\newblock Trisecting 4--manifolds.
\newblock {\em Geom. Topol.}, 20(6):3097--3132, 2016.

\bibitem{GS}
Robert~E. Gompf and Andr\'as~I. Stipsicz.
\newblock {\em {$4$}-manifolds and {K}irby calculus}, volume~20 of {\em
  Graduate Studies in Mathematics}.
\newblock American Mathematical Society, Providence, RI, 1999.

\bibitem{HT}
A.~Hatcher and W.~Thurston.
\newblock A presentation for the mapping class group of a closed orientable
  surface.
\newblock {\em Topology}, 19(3):221--237, 1980.

\bibitem{JH}
John Hempel.
\newblock 3-manifolds as viewed from the curve complex.
\newblock {\em Topology}, 40(3):631--657, 2001.

\bibitem{JJ}
Jesse Johnson.
\newblock Heegaard splittings and the pants complex.
\newblock {\em Algebr. Geom. Topol.}, 6:853--874, 2006.

\bibitem{JR}
Jesse Johnson and Hyam Rubinstein.
\newblock Mapping class groups of {H}eegaard splittings.
\newblock {\em J. Knot Theory Ramifications}, 22(5):1350018, 20, 2013.

\bibitem{LP}
Fran\c{c}ois Laudenbach and Valentin Po\'enaru.
\newblock A note on {$4$}-dimensional handlebodies.
\newblock {\em Bull. Soc. Math. France}, 100:337--344, 1972.

\bibitem{MSZ}
Jeffrey Meier, Trent Schirmer, and Alexander Zupan.
\newblock Classification of trisections and the generalized property {R}
  conjecture.
\newblock {\em Proc. Amer. Math. Soc.}, 144(11):4983--4997, 2016.

\bibitem{FW}
Friedhelm Waldhausen.
\newblock Heegaard-{Z}erlegungen der {$3$}-{S}ph\"are.
\newblock {\em Topology}, 7:195--203, 1968.

\bibitem{CW}
C.~T.~C. Wall.
\newblock On simply-connected {$4$}-manifolds.
\newblock {\em J. London Math. Soc.}, 39:141--149, 1964.

\end{thebibliography}

\bibliographystyle{plain}

\end{document}